%%%%%%%%%%%%%%%%%%%%%%%%%%%%%%%%%%%%%%%%%%%%%%%%%%%%%%%%%%%%%%%%%%%%%%%%%%
%\documentclass[a4paper]{amsart}
\documentclass[10pt,oneside]{amsart}
\usepackage{amssymb,amscd}
\usepackage{amsmath}
\usepackage{graphicx}

% Place for  additional packages:
% MATH -------------------------------------------------------------------
\let\phi\varphi
\let\kappa\varkappa
\let\le\leqslant
\let\ge\geqslant

% Sets and subsets
\let\emptyset\varnothing

\let\ES\varnothing

\newcommand\subcl{\mathrel{\underset{\mathrm{cl}}{\subset}}}

\newcommand\Cl{\mathop{\mathrm{Cl}}}

\newcommand{\pr}{\mathop{\mathrm{pr}}\nolimits}

\newcommand{\uni}[1]{\mathbf{1}_{{#1}}}

\newcommand\tle{\mathrel{\tilde{\le}}}

\newcommand\ups{{\uparrow}}
\newcommand\dns{{\downarrow}}
% TODO: ugly hack!
\newcommand\ddns{{\underset{\raisebox{9.5pt}[0pt][0pt]{$\downarrow$}}{\downarrow}}}

%%
% Exponents

%\newcommand\expre{\exp^{op}} ????

%  ... with bars

%  Monads for exponents

%%

% Categories
%   common

%   compositions

%   topology

%   top. algebra

%   convexity

%%  Lawson ...

%\newcommand\Lawsu{\mathcal{L}\mathrm{aws}_-}
%\newcommand\Llawsu{\mathcal{LL}\mathrm{aws}_-}

%%   ambiguous

%%   ambiguous and semilattices
\newcommand\Semz{\mathcal{S}\mathrm{em}_{0}}

\newcommand\Sempr{\mathcal{S}\mathrm{em}\mathcal{PR}}
\newcommand\Sempral{\mathcal{S}\mathrm{em}\mathcal{PR}^*_L}
\newcommand\Semprhl{\mathcal{S}\mathrm{em}\mathcal{PR}^{\hat *}_L}
%%   compositions
\newcommand\hast{\mathbin{\hat *}}
\newcommand\hoast{\mathbin{\hat\circledast}}
\newcommand\astsc{\mathpunct{\overset{{\,*}}{,}\hskip0pt}}
\newcommand\hastsc{\mathpunct{\overset{{\,\hat *}}{,}\hskip0pt}}
%%   smiles
\let\sms\smallsmile
\newcommand\ssms{{%
\rlap{$\scriptstyle\smallsmile$}%
\raise .5ex \hbox{$\scriptstyle\smallsmile$}%
}}

\newcommand{\BBN}{\mathbb{N}}

\newcommand{\CCA}{\mathcal{A}}

%%

%%

% ------------------------------------------------------------------------

\usepackage{xy}
\xyoption{all}

\makeatother
\swapnumbers

\newtheorem*{theorem*}{Theorem}
\newtheorem{lemma}[subsection]{Lemma}
\newtheorem{proposition}[subsection]{Proposition}
\newtheorem{corollary}[subsection]{Corollary}
\theoremstyle{definition}
\newtheorem{definition}[subsection]{Definition}
\newtheorem*{definition*}{Definition}

\newtheorem*{remark*}{Remark}
\newtheorem{example}[subsection]{Example}
\newtheorem*{example*}{Example}

\newtheorem*{question*}{Question}
%%
%%
% ------------------------------------------------------------------------
\begin{document}

\title{Ambiguous representations of semilattices and imperfect information}

\author{Oleh Nykyforchyn, Oksana Mykytsey %etc.
}

\address{ Casimirus the Great University in Bydgoszcz\\
    Institute of Mathematics\\
    12 Weyssenhoff Sq., Bydgoszcz, Poland, 85064
}

\email{oleh.nyk@gmail.com}

\address{Vasyl' Stefanyk Precarpathian National University \\
    Department of Mathematics and Computer Science \\
    57 Shevchenka St., Ivano-Frankivsk, Ukraine, 76025\\
}

\email{oxana37@i.ua}

% The correct dates will be entered by the editor

\keywords{Ambiguous representation, continuous semilattice, duality of categories, imperfect information}
\subjclass{06B35;  % Continuous lattices and posets, applications
           18B30} % Categories of topological spaces and continuous mappings

\maketitle

\begin{abstract}
Crisp and lattice-valued ambiguous representations of one
continuous semilattice in another one are introduced and operation
of taking pseudo-inverse of the above relations is defined. It is
shown that continuous semilattices and their ambiguous
representations, for which taking pseudo-inverse is involutive,
form categories. Self-dualities and contravariant equivalences for
these categories are obtained. Possible interpretations and
applications to processing of imperfect information are discussed.
\end{abstract}

\section*{Introduction}

The goal of this work is to generalize notions of crisp and
$L$-fuzzy ambiguous representations introduced in
\cite{NykRep:AmbRep:11} for closed sets in compact Hausdorff
spaces. Fuzziness and roughness were combined to express the~main
idea that a~set in one space can be represented with a~set in
another space, e.g., a 2D photo can represent 3D object. This
representation is not necessarily unique, and the~object cannot be
recovered uniquely, hence we say ``ambiguous representation''.

It turned out that most of results of \cite{NykRep:AmbRep:11} can
be extended to wider settings, namely to continuous semilattices,
which are standard tool to represent partial information. This is
done in the~present paper, and an~interpretation of the~considered
objects is proposed.

The paper is organized as follows. First necessary definitions and
facts are given on continuous (semi-)lattices. Then we define
compatibilities, which are functions with values 0 and 1 that show
whether two pieces of information can be valid together. Next
ambiguous representations are introduced as crisp and $L$-fuzzy
relations between continuous semilattices. Operation of taking
pseudo-inverse is defined for these relations, its properties are
proved, and classes of pseudo-invertible representations are
investigated. It is shown that continuous semilattices and their
pseudo-invertible crisp and $L$-fuzzy ambiguous representations
form categories, and self-dualities and contravariant equivalences
are construsted for these categories.
Finally, we discuss possible meaning of
the~developed theory. Reader can start with the last section to get
the~general idea what is in the~previous parts of the~paper.

\section{Preliminaries}

We adopt the following definitions and notation, which are
consistent with \cite{GHK:CLD:03,ML:CWM:98}. Proofs of the facts
below can also be found there. From now on,
\emph{semilattice} means \emph{meet semilattice}, if otherwise is
not specified. If a~poset contains a~bottom (a~top) element, then
it is denoted by $0$ (resp.\ by $1$). A~top (a~bottom) element in
a~semilattice is also called a~\emph{unit} (resp.\ a~\emph{zero}).

For a~partial order $\le$ on a~set $X$, the~relation $\tle$,
defined as $x\tle y\iff y\le x$, for $x,y\in X$, is a partial order
called \emph{opposite} to $\le$, and $(X,\le)^{op}$ denotes the
poset $(X,\tle)$. If the original order $\le$ is obvious, we write
simply $X^{op}$ for the \emph{reversed} poset. We also apply
$\tilde{(\;)}$ to all notation to denote passing to the~opposite
order, i.e.\ write $\tilde X=X^{op}$, $\tilde\sup=\inf$, $\tilde
0=1$ etc. For a~morphism $f:(X\le)\to (Y,\le)$ in a~category
$\mathcal{P}\mathrm{oset}$ of posets and isotone (order preserving)
mappings, let $f^{op}$ be the~same mapping, but regarded as
$(X,\tilde \le)\to (Y,\tilde
\le)$. It is obvious that $f^{op}$ is isotone as well, thus
a~functor ${(-)}^{op}:\mathcal{P}\mathrm{oset}\to
\mathcal{P}\mathrm{oset}$ is obtained.

For a~subset $A$ of a poset $(X,\le)$, we denote
$$
A\ups=\{x\in X\mid a\le x\text{ for some }a\in A\},\;
A\dns=\{x\in X\mid x\le a\text{ for some }a\in A\}.
$$
If $A=A\ups$ ($A=A\dns$), then a~set $A$ is called \emph{upper}
(resp.\ \emph{lower}).

\iffalse

For a~poset $L$, we denote by $L^\top$ a~poset $L\cup\{\top\}$,
where $\top\notin L$ becomes a~new top element. Observe that
$L^\top$ is a~complete lattice if and only if $L$ is a~complete
semilattice.

\fi

A~\emph{topological meet} (or \emph{join}) \emph{semilattice} is a
semilattice $L$ carrying a topology such that the mapping $\land:L
\times L\to L$ (resp.\ $\lor:L\times L\to L$) is continuous.
A~lattice $L$ with a~topology such that both $\land:L
\times L\to L$ and $\lor:L\times L\to L$ are continuous is called
a~\emph{topological lattice}.

A~poset $(X,\le)$ is called \emph{complete} if each non-empty
subset $A\subset X$ has a~least upper bound.

A set $A$ in a~poset $(X,\le)$ is \emph{directed} (\emph{filtered})
if, for all $x,y\in A$, there is $z\in A$ such that $x\le z$, $y\le
z$ (resp.\ $z\le x$, $z\le y$). A~poset is called \emph{directed
complete} (\emph{dcpo} for short) if it has lowest upper bounds for
all its directed subsets.

The~\emph{Scott topology} $\sigma(X)$ on $(X,\le)$ consists of all
those $U\subseteq X$ that satisfy $x\in U~\Leftrightarrow ~U\cap
D\ne\emptyset$ for every $\le-$directed $D\subseteq X$ with a least
upper bound $x.$ Note that ``$\Leftarrow$'' above implies
$U=U\uparrow$.

In a~dcpo $X$, a~set is Scott closed iff it is lower and closed
under suprema of directed subsets.

A mapping $f$ between dcpo's $X$ and $Y$ is
\emph{Scott continuous}, i.e.\ continuous w.r.t.\ $\sigma(X)$ and
$\sigma(Y)$, if and only if it preserves suprema of directed sets.

Let $L$ be a poset. We say that $x$ is \emph{way below} $y$ and
write $x\ll y$ iff, for all directed subsets $D\subseteq L$ such
that $\sup D$ exists, the relation $y\leq \sup D$ implies the
existence of $d\in D$ such that $x\leq d$. ``Way-below'' relation
is transitive and antisymmetric. An~element satisfying $x\ll x$ is
said to be \emph{compact} or \emph{isolated from below}, and in
this case the set $\{x\}\ups$ is Scott open.

A~poset $L$ is called \emph{continuous} if, for each element $y\in
L$, the~set $y\ddns=\{x\in L\mid {x\ll y}\} $ is directed and its
least upper bound is $y$. A~\emph{domain} is a continuous dcpo. If
a~domain is a~semilattice, it is called a~\emph{continuous
semilattice}.

A complete lattice $L$ is called \emph{completely distributive} if,
for each collection of sets $(M_t)_{t\in T}$ in $L$, the equality
$\inf\{\sup M_t\mid t\in T\}=\sup\{\inf\{\alpha_t\mid t\in T\}\mid
(\alpha_t)_{t\in T}\in
\prod_{t\in T}M_t \}$ holds. This property implies distributivity,
but the~converse fails. Then both $L$ and $L^{op}$ are continuous,
and the~join of Scott topologies on $L$ and $L^{op}$ provides
the~unique compact Hausdorff topology on $L$ with~a basis
consisting of small sublattices (\emph{Lawson topology}). In
the~sequel completely distributive lattices will be regarded with
Lawson topologies.

For a~subset $R\subset S_1\times S_2\times \dots\times S_n$,
an~index $k\in\{1,2,\dots,n\}$, and elements $\alpha_1\in S_1$,
\dots, $\alpha_{k-1}\in S_{k-1}$, $\alpha_{k+1}\in S_{k+1}$, \dots, $\alpha_n\in
S_n$, the~set
\begin{gather*}
\underset{\text{\tiny\rm all factors except $\scriptstyle k$-th}}{
\underbrace{
\pr_{1\dots(k-1)(k+1)\dots n}
}}
\bigl(
R\cap (\{\alpha_1\}\times\dots\times\{\alpha_{k-1}\}\times
S_k\times \{\alpha_{k+1}\}\times\dots\times\{\alpha_n\})
\bigr)
\\
=\{\alpha\in S_k\mid (\alpha_1,\dots,\alpha_{k-1},\alpha,\alpha_{k+1}, \dots, \alpha_n)\in R\}
\subset S_k
\end{gather*}
is called a~\emph{cut} of $R$. We denote it
$\alpha_1\dots\alpha_{k-1}R\alpha_{k+1} \dots \alpha_n$.

The~following obvious property is quite useful.

\begin{lemma}\label{lem.scott-cuts}
Let $S_1$, $S_2$, \dots, $S_n$ be {\bfseries dcpo}s, $R\subset
S_1\times S_2\times
\dots\times S_n$. Then $R$ is Scott closed if and only
if all its cuts are Scott closed.
\end{lemma}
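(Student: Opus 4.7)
The plan is to work from the standard criterion that a subset of a dcpo is Scott closed iff it is a lower set and closed under directed suprema. The product $S_1\times\dots\times S_n$ is itself a dcpo with the componentwise order, and directed suprema there are computed coordinatewise, so this criterion applies both to $R$ and to every cut of $R$. The proof then splits into the two implications, of which the ``if'' direction is considerably more delicate.

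For the forward direction, assume $R$ is Scott closed and fix a cut $C=\alpha_1\dots\alpha_{k-1}R\alpha_{k+1}\dots\alpha_n$. If $\beta\le\gamma\in C$, the tuple obtained by substituting $\beta$ in the $k$-th slot sits componentwise below an element of $R$, hence belongs to $R$ by lowerness, giving $\beta\in C$. If $E\subset C$ is directed with supremum $\gamma$, inserting elements of $E$ into position $k$ produces a directed subset of $R$ whose componentwise supremum puts $\gamma$ in that slot; directed-sup closure of $R$ yields $\gamma\in C$.

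For the converse, assume every cut of $R$ is Scott closed. Lowerness of $R$ follows by descending one coordinate at a time, invoking lowerness of the appropriate cut at each step. The hard part is directed-sup closure: given a directed $D\subset R$ with componentwise supremum $\alpha=(\alpha_1,\dots,\alpha_n)$, I would prove by induction on $k=0,1,\dots,n$ the statement \emph{``$(\alpha_1,\dots,\alpha_k,d_{k+1},\dots,d_n)\in R$ for every $d\in D$''}. The base case $k=0$ is just $D\subset R$. For the inductive step, fix $d\in D$ and consider the cut $C_k=\alpha_1\dots\alpha_{k-1}R\,d_{k+1}\dots d_n\subset S_k$; for any $e\in D$ with $e\ge d$, applying the induction hypothesis at $e$ and then the already established lowerness of $R$ gives $(\alpha_1,\dots,\alpha_{k-1},e_k,d_{k+1},\dots,d_n)\in R$, i.e.\ $e_k\in C_k$. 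Since $\{e_k\mid e\in D,\,e\ge d\}$ is directed with supremum $\alpha_k$ (the family $\{e\in D\mid e\ge d\}$ is cofinal in $D$), Scott closedness of $C_k$ forces $\alpha_k\in C_k$, closing the step. The case $k=n$ yields $\alpha\in R$.

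The principal obstacle is the need to quantify the inductive hypothesis over \emph{all} $d\in D$ rather than fixing a single reference point: advancing the $k$-th coordinate from $d_k$ to $\alpha_k$ happens inside the cut $C_k$, which depends on $d_{k+1},\dots,d_n$; yet the approximating family $\{e_k\}$ only reaches $\alpha_k$ when $e$ ranges over $D$ above $d$, and at the next coordinate one must reuse the statement for those very $e$. This zigzag between $d$ and cofinal $e\ge d$ is precisely what makes the coordinatewise reduction go through.
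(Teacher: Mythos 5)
Your proof is correct, but it takes a genuinely different route from the paper's. The paper reduces without loss of generality to $n=2$ and exploits a structural fact: the lower closure $D\dns$ of a directed subset $D$ of a product is again directed and lower, hence is a \emph{rectangle} $D_1\times D_2$ with $D_i$ directed and lower; one then takes the supremum in the second coordinate inside each cut $xR$ and afterwards in the first coordinate inside the cut $Rb$. You instead keep general $n$ and run a coordinate-by-coordinate induction, promoting $d_k$ to $\alpha_k$ inside the cut $\alpha_1\dots\alpha_{k-1}R\,d_{k+1}\dots d_n$, using the cofinal subfamily $\{e\in D\mid e\ge d\}$ so that the $k$-th coordinates still reach $\alpha_k$ while the trailing coordinates stay pinned at $d$. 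Your version buys two things: it handles arbitrary $n$ directly rather than via an implicit induction behind the ``without loss of generality,'' and it avoids having to justify the rectangle decomposition of $D\dns$ (true, but stated without proof in the paper). The paper's argument is shorter once that decomposition is granted. Both arguments correctly use lowerness of $R$ (established first from lowerness of the cuts) as a lubricant before taking directed suprema, and both are complete.
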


\begin{proof}
Necessity is obvious. To prove sufficiency, observe that all cuts
of $R$ being Scott closed (hence lower sets) implies that $R$ is
a~lower set as well. Without loss of generality we can consider
only the~case $n=2$. Let a~subset $D\subset R\subset S_1\times S_2$
be directed. Then the~set $D'=D\dns\subset R$ is directed as well
and lower in $S_1\times S_2$, hence is the~product $D_1\times D_2$
of directed lower sets $D_1\subset S_1$ and $D_2\subset S_2$. For
any $x\in D_1$ the~set $\{x\}\times D_2$ is contained in $R$,
therefore $D_2$ is contained in the~Scott closed cut $xR$. This
implies that the~least upper bound $b=\sup D_2$, which exists
because $S_2$ is a~dcpo, is an~element of this cut, hence $(x,b)\in
R$ for all $x\in D_1$. Thus $D_1$ is contained in the~Scott closed
cut $Rb$, therefore $a=\sup D_1$ in the~dcpo $S_1$ also belongs to
this cut. We obtain that $(a,b)=\sup D'=\sup D$ belongs to $R$ for
any directed subset $D\subset R$, i.e., $R$ is Scott closed.
\end{proof}

Similarly, for a~subset $R\subset S_1\times S_2\times \dots\times
S_n$, an~index $k\in\{1,2,\dots,n\}$, and \emph{subsets}
$A_1\subset S_1$, \dots, $A_{k-1}\subset S_{k-1}$, $A_{k+1}\subset
S_{k+1}$, \dots, $A_n\subset S_n$, we denote
\begin{gather*}
A_1\dots A_{k-1}R A_{k+1} \dots A_n
=\{\alpha\in S_k\mid
\text{there are }
\alpha_1\in S_1, \dots, \alpha_{k-1}\in S_{k-1},
\\
\alpha_{k+1}\in S_{k+1},
\dots, \alpha_n\in S_n
\text{ such that }
(\alpha_1,\dots,\alpha_{k-1},\alpha,\alpha_{k+1}, \dots, \alpha_n)\in R\}.
\end{gather*}

\section{Compatibilities for continuous semilattices}

Let $\Semz$ be the~category of all continuous (meet) semilattices
with zeros and all Scott continuous zero-preserving semilattice
morphisms. We denote by $[S\to S']_0$ the~set of all arrows from
$S$ to $S'$ in $\Semz$.

We use the~results of~\cite{NykMyk:ConMes:10} and denote by
$S^\land$ the~set of all (probably empty) Scott open filters in $S$
except $S$ itself. We order $S^\land$ by inclusion, then $S^\land$
is a~continuous semilattice with the~bottom element $\ES$. Then
$S^\land$ can be regarded as $[S\to\{0,1\}]_0$, i.e., its elements
can be identified with the~bottom-preserving meet-preserving Scott
continuous maps $S\to\{0,1\}$ (the~preimages of $\{1\}$ under such
maps are precisely the~non-trivial Scott open filters in~$S$). For
an~arrow $f:S_1\to S_2$ in $\Semz$ the~formula
$f^\land(F)=f^{-1}(F)$, $F\in S_2^\land$, determines the~mapping
$f^\land:S_2^\land\to S_1^\land$, which is an~arrow in $\Semz$ as
well, hence the~contravariant functor $(-)^{\land}$ in $\Semz$ is
obtained. The~assignment $s\mapsto\{F\in S^\land\mid s\in F\}$ is
an~isomorphism $u_S:S\to S^{\land\land}$ which is a~component of
a~natural transformation $u:\uni{\Semz}\to (-)^{\land\land}$,
hence the~functor $(-)^\land$ is involutive, i.e., is
a~self-duality. In fact it is a~restriction of the~Lawson
duality~\cite{GHK:CLD:03}.

We slightly change the~terminology introduced in
\cite{NykMyk:ConMes:10}:

\begin{definition}
Let $S,S'$ be continuous semilattices with bottom elements
respectively $0,0'$. A~mapping $P:S\times S'\to\{0,1\}$ is called
a~\emph{compatibility} if:

(1) $P$ is distributive w.r.t. $\land$ in the both variables, and
$P(0,y)=P(x,0')=0$ for all $x\in S$, $y\in S'$;

(2) $P$ is Scott continuous.

\medskip

If, additionally, the following holds:

(3) $P$ separates elements of $S$ and of $S'$, i.e.:

(3a) for each $x_1,x_2\in S$, if $P(x_1,y)=P(x_2,y)$ for all $y\in
S'$, then $x_1=x_2$;

(3b) for each $y_1,y_2\in S'$, if $P(x,y_1)=P(x,y_2)$ for all $x\in
S$, then $y_1=y_2$;

then we call $P$ a \emph{separating compatibility}.
\end{definition}

The~definition of (separating) compatibility is symmetric in
the~sense that the~mapping $P':S'\times S\to
\{0,1\}$, $P'(y,x)=P(x,y)$ is a~(separating)
compatibility as well, which we call the~\emph{reverse
compatibility}. For compatibilities we use also infix notation
$xPy\equiv P(x,y)$.

We can consider a~compatibility $P:S\times S'\to\{0,1\}$ as
a~characteristic mapping of a~binary relation $P\subset S \times
S'$, hence it is natural to denote $xP=\{y\in S'\mid xPy=1\}$,
$Py=\{x\in S\mid xPy=1\}$ for all $x\in S$, $y\in S'$.

The~following statement from \cite{NykMyk:ConMes:10} is of crucial
importance:
\begin{proposition}\label{st.polar-SS}
Let $S,S'$ be continuous meet semilattices with bottom elements
$0,0'$ resp. If $P:S\times S'\to \{0,1\}$ is a~separating
compatibility, then the mapping $i$ that takes each $x\in S$ to
$xP$ is an~isomorphism $S\to S'^\land$. Conversely, each
isomorphism $i:S\to S'^\land$ is determined by the~above formula
for a~unique separating compatibility $P:S\times S'\to \{0,1\}$.
\end{proposition}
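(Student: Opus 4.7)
\emph{Plan.} My approach is, for the forward direction, to verify four properties of $i(x):=xP$ in order---values in $S'^\land$, being a $\Semz$-morphism, being an order embedding, and surjectivity---and then, for the converse, to read off~$P$ from an arbitrary isomorphism~$i$.

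\emph{Stages 1--3.} For fixed $x\in S$, the map $P(x,\cdot)\colon S'\to\{0,1\}$ is Scott continuous by~(2) and $\land$-distributive by~(1); hence $xP=P(x,\cdot)^{-1}(\{1\})$ is Scott open and closed under binary meets, and $P(x,0')=0$ forces $xP\ne S'$, so $xP\in S'^\land$. Symmetric arguments in the first variable yield $i(0)=\varnothing$ (the bottom of $S'^\land$), $i(x_1\land x_2)=i(x_1)\cap i(x_2)$ (which is the meet in $S'^\land$), and $i(\sup D)=\bigcup_{d\in D}i(d)=\sup i(D)$ for directed $D\subseteq S$; thus $i\in[S\to S'^\land]_0$. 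Furthermore, if $i(x_1)\subseteq i(x_2)$ then $P(x_1,y)\le P(x_2,y)$ for every~$y$, whence $P(x_1\land x_2,y)=P(x_1,y)$, and axiom~(3a) forces $x_1\land x_2=x_1$, i.e.\ $x_1\le x_2$. So $i$ is an order-embedding $\Semz$-morphism.

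\emph{The main obstacle: surjectivity.} Here I would exploit the symmetry of~$P$. The reverse mapping $j\colon S'\to S^\land$, $j(y):=Py$, is by the analogous argument---using (3b) in place of (3a)---also an order-embedding $\Semz$-morphism. Unwinding the definitions gives
\[
j^\land(u_S(x))=\{y\in S'\mid x\in Py\}=\{y\in S'\mid P(x,y)=1\}=xP=i(x),
\]
so $i=j^\land\circ u_S$, and symmetrically $j=i^\land\circ u_{S'}$. Since $(-)^\land$ is an involutive self-duality on $\Semz$ with~$u$ as its natural isomorphism, $i$ is surjective iff $j^\land$ is iff $j$ is. Thus by symmetry it suffices to produce, for an arbitrary $F\in S'^\land$, some $x_F\in S$ with $x_F P=F$; following \cite{NykMyk:ConMes:10}, $x_F$ is built as a directed supremum of approximants read off~$F$ through~$P$ via the continuity of~$S$. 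This is the technical heart of the proof.

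\emph{Converse.} Given an isomorphism $i\colon S\to S'^\land$, the formula $P(x,y)=1\iff y\in i(x)$ is the unique choice making $i(x)=xP$, yielding uniqueness. Conditions~(1) and~(2) are inherited from $i$ being a $\Semz$-morphism whose values are Scott open filters. Axiom~(3a) is injectivity of~$i$. For~(3b), if $P(x,y_1)=P(x,y_2)$ for every~$x$, then $y_1$ and $y_2$ lie in precisely the same elements of the image $i(S)=S'^\land$; equivalently, $u_{S'}(y_1)=u_{S'}(y_2)$, and since $u_{S'}$ is a $\Semz$-isomorphism this gives $y_1=y_2$.
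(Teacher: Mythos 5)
A preliminary remark: the paper itself does not prove this proposition --- it is imported from \cite{NykMyk:ConMes:10} as a known result --- so there is no in-paper argument to compare yours against, and your proposal has to stand entirely on its own.

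Most of it does stand. Stages 1--3 (that $xP$ is a proper Scott-open filter, that $i$ is a zero- and meet-preserving Scott-continuous map because directed sups and binary meets in $S'^\land$ are unions and intersections, and that $i$ is an order embedding via (3a)) are correct, as is the entire converse direction, including the appeal to $u_{S'}$ for axiom (3b). The genuine gap is surjectivity of $i$, which is the real content of the forward direction, and your treatment of it is both circular and incomplete. It is circular because the ``symmetry'' reduction terminates in ``it suffices to produce, for an arbitrary $F\in S'^\land$, some $x_F\in S$ with $x_FP=F$'' --- but that \emph{is} the surjectivity of $i$, verbatim, so the reduction buys nothing. The chain ``$i$ is surjective iff $j^\land$ is iff $j$ is'' is also not right as written: the first equivalence is fine since $u_S$ is an isomorphism and $i=j^\land\circ u_S$ (that identity you verify correctly), but the second does not follow from $(-)^\land$ being a self-duality. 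A duality interchanges monomorphisms and epimorphisms; it does not transport surjectivity to surjectivity, epimorphisms in $\Semz$ are not shown anywhere in this paper to be surjective, and the implication you would actually need --- ``$j$ an order embedding implies $j^\land$ surjective'' --- is itself a nontrivial property of the Lawson duality that would have to be proved, not invoked. Finally, the construction of $x_F$ ``as a directed supremum of approximants'' is precisely the technical heart you name, and it is not carried out: no candidate for $x_F$ is written down. Note that the separation axiom (3b) enters your forward direction only through this broken duality step, yet (3b) is exactly what surjectivity hinges on: a compatibility satisfying (1), (2), (3a) alone makes $i$ an order embedding that can easily fail to be onto (e.g.\ $S=\{0,1\}$, $S'=[0,1]$, $P(1,y)=1$ iff $y>1/2$). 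Until the construction of $x_F$ from (3b) is supplied, the forward direction is not proved.
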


Similarly, for a~fixed separating compatibility $P:S\times S'\to
\{0,1\}$ and subsets $A\subset S$, $B\subset S'$, the~sets
$$
A^\perp=\{y\in S'\mid xPy=0\text{ for all }x\in A\},
\quad
B^\perp=\{x\in S\mid xPy=0\text{ for all }y\in B\}
$$
will be called the~\emph{transversals} of $A$ and $B$ respectively.

It is easy to see that $A^\perp$ and $B^\perp$ are Scott closed,
and $A^{\perp\perp}=(A^\perp)^\perp$ is the~Scott closure of $A$,
i.e., the~least Scott closed (hence lower) subset in $S$ that
contains $A$, similarly for $B^{\perp\perp}$.

Obviously the~transversal operation $(-)^\perp$ is antitone, i.e.,
$A\subset B$ implies $A^\perp\supset B^\perp$, and for a~filtered
family $\{A_\alpha\mid \alpha\in\CCA\}$ of closed lower sets
the~equality
$$(\bigcap_{\alpha\in\CCA}A_\alpha)^\perp=\Cl(\bigcup_{\alpha\in\CCA}A_\alpha^\perp)$$
is valid. This implies
$$\bigcap_{\alpha\in\CCA}A_\alpha=(\Cl(\bigcup_{\alpha\in\CCA}A_\alpha^\perp))^\perp
=(\bigcup_{\alpha\in\CCA}A_\alpha^\perp)^\perp.
$$

\section{Category of ambiguous representations}

\begin{definition}
Let $S_1,S_2$ be continuous semilattices with zeros.
%  $0_1$ and $0_2$ resp
An~\emph{ambiguous representation} of $S_1$ in $S_2$ is a~binary
relation $R\subset S_1\times S_2$ such that

(a) if $(x,y)\in R$, $x\le x'$ in $S_1$, and $y'\le y$ in $S_2$, then
$(x',y')\in R$ as well;

(b) for all $x\in S_1$ the~set $xR=\{y\in S_2\mid (x,y)\in R\}$ is
non-empty and closed under directed sups in~$S_2$.
\end{definition}

Observe that (a) implies $(x,0_2)\in R$ for all $x\in S$. It also
implies that $xR$ in (b) is a~lower set, hence due to (b) is Scott
closed.

Thus we can rearrange the~requirements as follows, obtaining
an~equivalent definition:

\begin{definition}
For continuous semilattices $S_1,S_2$ with zeros, a~binary relation
$R\subset S_1\times S_2$ is an~\emph{ambiguous representation} of
$S_1$ in $S_2$ if

(a${}'$) for all $x\in S_1$ the~set $xR=\{y\in S_2\mid (x,y)\in
R\}$ is non-empty and Scott closed (i.e., a~directed complete lower
set).

(b${}'$) for all $y\in S_2$ the~set $Ry=\{x\in S_1\mid (x,y)\in
R\}$ is an~upper set.
\end{definition}

We call such ambiguous representations \emph{crisp} to distinguish
them from $L$-fuzzy ambiguous representations that will be defined
in the~next section.

Let separating compatibilities $P_1:S_1\times
\Hat{S_1}\to\{0,1\}$, $P_2=S_2\times
\Hat{S_2}\to\{0,1\}$ be fixed.

For an~ambiguous representation $R\subset S_1\times S_2$ define
the~relation $R^\sms\subset \Hat{S_2}\times \Hat {S_1}$ with
the~formula
$$
R^\sms =
\bigl\{
(\hat y,\hat x)\in \Hat{S_2}\times \Hat{S_1}
\mid
\text{if }xP_1\hat x=1\text{ for some }x\in S_1,
\text{then there is }y\in xR,\; yP_2\hat y=1
\bigr\},
$$
or, equivalently
$$
R^\sms =
\bigl\{
(\hat y,\hat x)\in \Hat{S_2}\times \Hat{S_1}
\mid
\text{if }x\in S_1\text{ is such that }
yP_2\hat y=0\text{ for all }y\in xR,
\text{then } xP_1\hat x=0
\bigr\}.
$$

Obviously $R^\sms$ is an~ambiguous representation as well, hence we
can calculate $R^\ssms=(R^\sms)^\sms\subset S_1\times S_2$ using
the~reverse (in fact the~same) separating compatibilities again.

\begin{proposition}
For each ambiguous representation $R\subset S_1\times S_2$
the~inclusion $R^\ssms\subset R$ holds. The~equality $R=R^\ssms$ is
equivalent to the~condition

{\rm (c) if $(x,y)\in R$ and $y'\ll y$ in $S_2$, then there is
$x'\ll x$ in $S$ such that $(x',y')\in R$. }
\end{proposition}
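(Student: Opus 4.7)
The plan is to recast $R^\ssms$ as a concrete interpolation condition on pairs $(x,y)$, from which both the unconditional inclusion $R^\ssms\subset R$ and the characterization of equality by~(c) become routine. Throughout, for $\hat y\in\Hat{S_2}$ I write $G_{\hat y}=\{y\in S_2\mid yP_2\hat y=1\}$ for the Scott-open filter in $S_2$ corresponding to $\hat y$ under $S_2\cong\Hat{S_2}^\land$, and dually $F_y=\{\hat y\in\Hat{S_2}\mid yP_2\hat y=1\}$, $F_x=\{\hat x\in\Hat{S_1}\mid xP_1\hat x=1\}$. Using the contrapositive form of the definition of $R^\sms$, setting
$$
A_{\hat y}=\{x'\in S_1\mid x'R\cap G_{\hat y}=\varnothing\}\subset S_1,
$$
one sees that $(\hat y,\hat x)\in R^\sms$ is precisely $\hat x\in A_{\hat y}^\perp$. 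Applying the $\sms$-operation once more and unfolding via the natural iso $u_{S_i}\colon S_i\to\Hat{\Hat{S_i}}$ (so that $\{x\}^{\perp\perp}=x\dns$), this gives
$$
(x,y)\in R^\ssms \iff \forall\hat y\in F_y:\; F_x\cap A_{\hat y}^\perp\ne\varnothing \iff \forall\hat y\in F_y:\; x\notin\overline{A_{\hat y}},
$$
where the bar denotes Scott closure in $S_1$. Axiom (b${}'$) makes $A_{\hat y}$ a lower set in $S_1$, so the standard basis of the Scott topology on the continuous semilattice $S_1$ by the sets $\{z\mid x'\ll z\}$ yields the working reformulation
$$
x\notin\overline{A_{\hat y}}\iff \exists x'\ll x,\;\exists y''\in G_{\hat y}:\;(x',y'')\in R.
$$

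The inclusion $R^\ssms\subset R$ now follows by contraposition. If $(x,y)\notin R$ then $y\notin xR=(xR)^{\perp\perp}$, which produces $\hat y\in (xR)^\perp\cap F_y$; this $\hat y$ satisfies $x\in A_{\hat y}\subset\overline{A_{\hat y}}$, so $(x,y)\notin R^\ssms$. Assuming moreover (c) and $(x,y)\in R$, for each $\hat y\in F_y$ one picks $y'\ll y$ inside $G_{\hat y}$ by Scott-openness, applies (c) to obtain $x'\ll x$ with $(x',y')\in R$, and reads off $x\notin\overline{A_{\hat y}}$; this proves (c)$\Rightarrow R\subset R^\ssms$. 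Conversely, suppose $R=R^\ssms$ and fix $(x,y)\in R$, $y'\ll y$. Here the decisive tool is that Scott-open filters form a basis of the Scott topology on the continuous semilattice $S_2$: one chooses such a filter $G$ with $y\in G\subset\{z\in S_2\mid y'\ll z\}$ and lets $\hat y\in\Hat{S_2}$ be its name, so $\hat y\in F_y$ and $G_{\hat y}=G$. Applying the reformulation of $R^\ssms$ to $(x,y)$ produces $x'\ll x$ and $y''\in G$ with $(x',y'')\in R$; since $y'\ll y''$ gives $y'\le y''$, axiom (a) yields $(x',y')\in R$, completing~(c).

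The main technical point is the first paragraph: correctly tracking the double-dual identifications needed to rewrite $R^\ssms$ as the interpolation condition $x\notin\overline{A_{\hat y}}$, and recognizing that $A_{\hat y}$ is automatically lower so that the way-below characterization of Scott closure is available. Once that is done, both directions of the equivalence with~(c) rely on the same further ingredient—the base of Scott-open filters for the Scott topology on a continuous semilattice—which is what lets one manufacture, given any $y'\ll y$, a filter $G_{\hat y}$ lying entirely in $\{z\mid y'\ll z\}$.
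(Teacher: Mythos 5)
Your argument is correct and follows essentially the same route as the paper's proof: both reduce membership in $R^\ssms$ via the transversal calculus to the condition that $x$ avoids the Scott closure of the lower set $\{x'\in S_1\mid x'R\cap G_{\hat y}=\emptyset\}$ for every Scott-open filter $G_{\hat y}$ compatible with $y$, and then exploit that the Scott closure of a lower set consists exactly of the points all of whose way-below approximants it contains. The only real difference is bookkeeping in the converse direction, where you explicitly invoke the basis of Scott-open filters (via an interpolation $y'\ll y''\ll y$) to manufacture $G$ with $y\in G\subset\{z\mid y'\ll z\}$, whereas the paper packages the same content into the identity $(\bigcap_{\alpha}A_\alpha)^\perp=\Cl(\bigcup_{\alpha}A_\alpha^\perp)$ for filtered families of closed lower sets.
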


\begin{proof}
Observe the~validity of the~formula
$$
\hat y R^\sms=
\bigl\{x\in S_1
\bigm|
\hat y\in (xR)^\perp
\bigr\}^\perp.
$$
Thus
$$
\bar x R^\ssms
=
\bigl\{\hat y\in \Hat{S_2}
\bigm|
\bar x\in (\hat y R^\sms)^\perp
\bigr\}^\perp
=
\bigl\{\hat y\in \Hat{S_2}
\bigm|
\bar x\in
\{x\in S
\mid
\hat y\in (xR)^\perp
\}^{\perp\perp}
\bigr\}^\perp.
$$

Taking into account
$$
\bar x\in
\{x\in S
\mid
\hat y\in (xR)^\perp
\}^{\perp\perp}
\Leftarrow
\hat y\in (\bar x R)^\perp,
$$
we obtain
$$
\bar x R^\ssms
\subset
\bigl\{
\hat y\in\Hat{S'}\mid
\hat y\in (\bar x R)^\perp
\bigr\}^\perp=(\bar x R)^{\perp\perp}=\bar x R
$$
for all $\bar x\in S$, which is the~required inclusion.

The~set $\{x\in S\mid\hat y\in (xR)^\perp\}$ is lower, therefore
its double transversal is the~closure, thus
\begin{align*}
\bar x R^\ssms
&=
\bigl\{\hat y\in \Hat{S'}
\bigm|
\bar x\in
\Cl\{x\in S
\mid
\hat y\in (xR)^\perp
\}
\bigr\}^\perp
\\
&=
\bigl\{\hat y\in \Hat{S'}
\bigm|
\hat y\in (xR)^\perp
\text{ for all }x\ll \bar x
\bigr\}^\perp
\\
&=
\bigl(
\bigcap\{(xR)^\perp\mid x\ll \bar x\}
\bigr)^\perp.
\end{align*}

The~family $\{\{xR)^\perp\mid x\ll \bar x\}$ of closed lower sets
is filtered, hence the~transversal of its intersection equals
$$
\Cl
\bigl(
\bigcup\{(xR)^{\perp\perp}\mid x\ll \bar x\}
\bigr)
=
\Cl
\bigl(
\bigcup\{xR\mid x\ll \bar x\}
\bigr),
$$
and the~equality $R^\ssms=R$ is equivalent to
$$
\bar xR
=
\Cl
\bigl(
\bigcup\{xR\mid x\ll \bar x\}
\bigr),
$$
which in fact is the~condition~(c).
\end{proof}

The~equality $R\ssms=R$ implies $(R^\sms)^\ssms=R^\sms$, hence, if
(c) holds for $R$. then it holds for $R^\sms$ as well. Therefore on
such ambiguous representations the~operation $(\;\;)^\sms$ is
involutive, and we call each of $R$ and $R^\sms$
the~\emph{pseudo-inverse} to the~other one. The ambiguous
representations satisfying (c) are called \emph{pseudo-invertible}.

One of the~reasons to consider this subclass is that, if we compose
ambiguous representations as relations, i.e., for $R\subset
S_1\times S_2$, $Q\subset S_2\times S_3$:
$$
RQ=\{(x,z)\in S_1\times S_3\mid \text{there is }y\in S_2
\text{ such that }(x,y)\in R, (y,z)\in Q\},
$$
then the~resulting relation can fail to satisfy closedness in
the~condition (b) of the~definition of ambiguous representation,
hence ambiguous representations do not form a~category.

To improve things, redefine the~composition as
\begin{gather*}
R{;}Q=\{(x,z)\in S_1\times S_3\mid z\in\Cl(xRQ)\}
\\
=
\{(x,z)\in S_1\times S_3\mid
\text{for all }z'\ll z
\text{ there is }y\in S_2
\text{ such that }(x,y)\in R, (y,z')\in Q\}.
\end{gather*}
Now closedness is at hand, but the composition ``$;$'' is not
associative.

\begin{lemma}
For all ambiguous representations $R\subset S_1\times S_2$,
$Q\subset S_2\times S_3$ the~inclusion $Q^\sms{;}R^\sms\subset
(R{;}Q)^\sms$ holds.
\end{lemma}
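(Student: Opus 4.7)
The plan is to chase the definitions through the intermediate dual $\Hat{S_2}$, using continuity of $\Hat{S_1}$ to bridge the gap between the ``closure'' composition appearing in the hypothesis and the witness chain demanded by the conclusion. Assume $(\hat z,\hat x)\in Q^\sms{;}R^\sms$, fix an arbitrary $x\in S_1$ with $xP_1\hat x=1$, and aim to produce $z\in x(R{;}Q)=\Cl(xRQ)$ with $zP_3\hat z=1$. Since $P_3(-,\hat z)\colon S_3\to\{0,1\}$ is Scott continuous and $\{1\}\subset\{0,1\}$ is Scott open, the set $P_3\hat z=\{z\in S_3\mid zP_3\hat z=1\}$ is Scott open in $S_3$; a Scott open set meets $\Cl(xRQ)$ iff it already meets $xRQ$, so it is enough to find $y\in S_2$ and $z\in S_3$ such that $(x,y)\in R$, $(y,z)\in Q$, and $zP_3\hat z=1$.

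The bridging step is an approximation in $\Hat{S_1}$. Because $\Hat{S_1}$ is a continuous semilattice, $\hat x$ is the directed supremum of $\{\hat x'\in\Hat{S_1}\mid\hat x'\ll\hat x\}$, and Scott continuity of $P_1(x,-)$ together with $xP_1\hat x=1$ forces $xP_1\hat x'=1$ for at least one such $\hat x'\ll\hat x$.

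With this $\hat x'$ in hand, I invoke the hypothesis in its equivalent ``for all $\hat x'\ll\hat x$'' form: there exists $\hat y\in\Hat{S_2}$ with $(\hat z,\hat y)\in Q^\sms$ and $(\hat y,\hat x')\in R^\sms$. Feeding the witness $xP_1\hat x'=1$ into the definition of $R^\sms$ yields some $y\in xR$ with $yP_2\hat y=1$; feeding the witness $yP_2\hat y=1$ into the definition of $Q^\sms$ then yields some $z\in yQ$ with $zP_3\hat z=1$. This pair $(y,z)$ is exactly what was required, and hence $(\hat z,\hat x)\in(R{;}Q)^\sms$.

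The only real subtlety, which I expect to be the main obstacle, is seeing why one cannot plug $\hat x$ itself directly into $Q^\sms{;}R^\sms$: the composition ``$;$'' was redefined via Scott closure precisely to enforce closedness of the cuts $\hat zQ^\sms R^\sms$, so the literal chain through $\Hat{S_2}$ is only guaranteed for approximants $\hat x'\ll\hat x$. Once the approximation lemma (continuity of $\Hat{S_1}$ plus Scott continuity of $P_1(x,-)$) is brought in to convert $xP_1\hat x=1$ into $xP_1\hat x'=1$ for some such $\hat x'$, the remainder of the argument is a purely mechanical two-stage propagation of witnesses.
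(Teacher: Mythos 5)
Your proof is correct, and its core — chaining the two witness-propagation implications through an intermediate $\hat y\in\Hat{S_2}$ to get from $xP_1\hat x'=1$ to some $z\in xRQ$ with $zP_3\hat z=1$ — is exactly the paper's argument. The only divergence is in how the Scott closure in the definition of ``${;}$'' is discharged: the paper observes that $\hat z(R{;}Q)^\sms$ is Scott closed and thereby reduces everything to the plain relational inclusion $Q^\sms R^\sms\subset(R{;}Q)^\sms$, whereas you unpack $\hat x\in\Cl(\hat zQ^\sms R^\sms)$ through an approximant $\hat x'\ll\hat x$ with $xP_1\hat x'=1$ (supplied by continuity of $\Hat{S_1}$ and Scott continuity of $P_1(x,-)$); both routes are valid, and your Scott-openness remark about $P_3(-,\hat z)$ is superfluous since any $z\in xRQ$ already lies in $\Cl(xRQ)$.
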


\begin{proof}
Since $\hat z Q^\sms{;}R^\sms=\Cl(zQ^\sms R^\sms)$ for all $\hat
z\in\Hat{S_3}$, and $\hat z(R{;}Q)^\sms$ is closed in $S_1$, it is
sufficient to prove $Q^\sms R^\sms\subset(R{;}Q)^\sms$.

If $(\hat z,\hat x)\in Q^\sms R^\sms$, then choose $\hat y\in
\Hat{S_2}$ such that $(\hat z,\hat y)\in Q^\sms$ and
$(\hat y,\hat x)\in R^\sms$, and combine
\begin{gather*}
\text{if }xP_1\hat x=1\text{ for some }x\in S_1,
\text{then there is }y\in xR,\; yP_2\hat y=1,
\\
\text{if }yP_2\hat y=1\text{ for some }y\in S_2,
\text{then there is }z\in yQ,\; zP_3\hat z=1
\end{gather*}
to obtain
$$
\text{if }xP_1\hat x=1\text{ for some }x\in S_1,
\text{then there is }z\in xRQ,\; zP_3\hat z=1.
$$
Moreover the~latter $z$ belongs to $xR{;}Q$, hence $(\hat z,\hat
x)\in (R{;}Q)^\sms$.
\end{proof}

\begin{corollary}
Let ambiguous representations $R\subset S_1\times S_2$, $Q\subset
S_2\times S_3$ be pseudo-invertible. Then $Q^\sms{;}R^\sms=
(R{;}Q)^\sms$, and the~composition $R{;}Q$ is pseudo-invertible as
well.
\end{corollary}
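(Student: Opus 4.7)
The plan is to get the reverse inclusion to the one from the preceding lemma by a duality trick, and then read off pseudo-invertibility of $R{;}Q$ directly.

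First I would observe two facts that are essentially already in hand. (i) The operation $(-)^\sms$ is monotone: if $R_1\subset R_2$ then for each $x$ one has $xR_1\subset xR_2$, so any existential witness $y\in xR_1$ required by the definition of $R_1^\sms$ also works as a witness for $R_2^\sms$; hence $R_1^\sms\subset R_2^\sms$. (ii) By the proposition, for every ambiguous representation $X$ the inclusion $X^{\ssms}\subset X$ holds, and pseudo-invertibility is preserved by $(-)^\sms$ (the remark between the proposition and the lemma).

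Next, I would apply the lemma with the roles of the factors swapped, using $Q^\sms\subset \Hat{S_3}\times\Hat{S_2}$ in place of $R$ and $R^\sms\subset\Hat{S_2}\times\Hat{S_1}$ in place of $Q$. This yields $(R^\sms)^\sms{;}(Q^\sms)^\sms\subset (Q^\sms{;}R^\sms)^\sms$, which by pseudo-invertibility of $R$ and $Q$ collapses to
\[
R{;}Q\;\subset\;(Q^\sms{;}R^\sms)^\sms.
\]
Applying the monotone operation $(-)^\sms$ to both sides and then the general inclusion $X^{\ssms}\subset X$ to $X=Q^\sms{;}R^\sms$, I obtain
\[
(R{;}Q)^\sms\;\subset\;(Q^\sms{;}R^\sms)^{\ssms}\;\subset\;Q^\sms{;}R^\sms.
\]
Combined with the lemma, this gives the desired equality $(R{;}Q)^\sms=Q^\sms{;}R^\sms$.

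Finally, to show $R{;}Q$ is pseudo-invertible, I apply the equality just proved once more, now to the pair $(R^\sms,Q^\sms)$, which is pseudo-invertible: this produces $(Q^\sms{;}R^\sms)^\sms=(R^\sms)^\sms{;}(Q^\sms)^\sms=R{;}Q$. Substituting the equality $(R{;}Q)^\sms=Q^\sms{;}R^\sms$ into the left-hand side gives $(R{;}Q)^{\ssms}=R{;}Q$, which is the pseudo-invertibility condition. The only genuinely delicate point is keeping track of the source/target semilattices when the lemma is applied with the roles of $R$ and $Q$ swapped, so that the composition $Q^\sms{;}R^\sms$ makes sense and pseudo-invertibility transfers correctly; everything else is bookkeeping.
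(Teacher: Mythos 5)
Your argument is correct and uses exactly the same ingredients as the paper's proof: the lemma applied to the swapped pair $(Q^\sms,R^\sms)$, monotonicity of $(-)^\sms$, and the general inclusion $X^{\ssms}\subset X$. The only difference is cosmetic --- you establish the equality $(R{;}Q)^\sms=Q^\sms{;}R^\sms$ first and deduce pseudo-invertibility from it, whereas the paper sandwiches $R{;}Q$ between $R{;}Q$ and $(R{;}Q)^{\ssms}$ to get pseudo-invertibility first and then applies $(-)^\sms$ to obtain the equality.
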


\begin{proof}
By the~above and taking into account that $(-)^\sms$ is isotone:
$$
R{;}Q=R^\ssms{;}Q^\ssms\subset (Q^\sms{;}R^\sms)^\sms
\subset
(R{;}Q)^\ssms,
$$
and the~reverse inclusion is known, hence
$R{;}Q=(R{;}Q)^\ssms=(Q^\sms{;}R^\sms)^\sms$, i.e., $R{;}Q$ is
pseudo-invertible. Apply $(-)^\sms$ to this again and obtain
$(R{;}Q)^\sms=(Q^\sms{;}R^\sms)^\ssms=Q^\sms{;}R^\sms$.
\end{proof}

\begin{proposition}
Composition ``;'' of the~pseudo-invertible ambiguous
representations is associative.
\end{proposition}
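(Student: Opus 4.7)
My plan is to show that for every $x \in S_1$ one has
\[
x((R;Q);T) = \Cl(xRQT) = x(R;(Q;T)),
\]
where $xRQT = \{w \in S_4 \mid \exists y \in S_2,\, z \in S_3 : (x,y)\in R,\, (y,z)\in Q,\, (z,w)\in T\}$ denotes the ordinary (closure-free) relational composition evaluated at $x$. Equality of both triple compositions with this common object then yields associativity.

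Two of the four inclusions come for free. Since $xRQ \subset x(R;Q) = \Cl(xRQ)$ and $yQT \subset y(Q;T) = \Cl(yQT)$, we get $xRQT \subset x(R;Q)T$ and $xRQT \subset xR(Q;T)$; taking Scott closures yields $\Cl(xRQT) \subset x((R;Q);T)$ and $\Cl(xRQT) \subset x(R;(Q;T))$. Next, the inclusion $x(R;(Q;T)) \subset \Cl(xRQT)$ requires no pseudo-invertibility: given $w$ in the left-hand side and an arbitrary $w_* \ll w$, two successive applications of the standard Scott-closure characterization in continuous posets (namely $u \in \Cl(A)$ iff for every $u_* \ll u$ there is $a\in A$ with $u_*\ll a$) produce first $y$ with $(x,y)\in R$ and $a \in \Cl(yQT)$ with $w_* \ll a$, then $b \in yQT$ with $w_* \ll b$; since $b \in xRQT$, this witnesses $w \in \Cl(xRQT)$.

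The main obstacle is the remaining inclusion $x((R;Q);T) \subset \Cl(xRQT)$, and this is exactly where pseudo-invertibility of $T$ becomes essential. Take $w \in \Cl(x(R;Q)T)$ and an arbitrary $w_* \ll w$, and interpolate $w_* \ll w_{**} \ll w$ in the continuous poset $S_4$. The Scott-closure characterization yields $a \in x(R;Q)T$ with $w_{**} \ll a$, so there is $z \in \Cl(xRQ)$ with $(z,a) \in T$. Applying condition~(c) for pseudo-invertibility of $T$ to $(z,a)\in T$ and $w_* \ll a$ produces $z' \ll z$ with $(z',w_*) \in T$. Since $z \in \Cl(xRQ)$ and $z' \ll z$, the same closure characterization yields $c \in xRQ$ with $z' \ll c$, and the definition of $xRQ$ gives $y \in S_2$ with $(x,y)\in R$, $(y,c)\in Q$; since $yQ$ is a lower set and $z' \le c$, also $(y, z') \in Q$. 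Hence $(x,y) \in R$, $(y, z') \in Q$, $(z', w_*) \in T$, i.e., $w_* \in xRQT$. As this holds for every $w_* \ll w$, the directed set $w\ddns$ lies in $xRQT$, and $w = \sup w\ddns \in \Cl(xRQT)$ because $\Cl(xRQT)$ is closed under directed suprema. Combining all four inclusions yields $(R;Q);T = R;(Q;T)$. (Alternatively, one could prove only $R;(Q;T) \subset (R;Q);T$ directly, and obtain the reverse inclusion by applying the same statement to the pseudo-inverses $T^\sms, Q^\sms, R^\sms$ and invoking the preceding corollary together with the involutivity of $(-)^\sms$ and its isotonicity.)
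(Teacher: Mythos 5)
Your proof is correct and takes essentially the same route as the paper's: both identify each triple composite, fibrewise over $x$, with $\Cl(xRQT)$ (the paper states this as the condition that for every $t'\ll t$ suitable $y,z$ exist), using interpolation of the way-below relation and invoking pseudo-invertibility of $T$ only for the inclusion $(R{;}Q){;}T\subset\Cl(xRQT)$.
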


\begin{proof}
Recall that, for ambiguous representations $R\subset S_1\times
S_2$, $Q\subset S_2\times S_3$, the~composition is calculated as
$$
R{;}Q=\{(x,z)\in S_1\times S_3\mid
\text{for all }z'\ll z
\text{ there is }y\in S_2
\text{ such that }(x,y)\in R, (y,z')\in Q
\}.
$$
It is also important that, for elements $a\ll c$ in a continuous
semilattice, there is an~element $b$ such that~$a\ll b\ll c$.

Now, let $(x,t)\in R{;}(Q{;}T)$. For any $t'\ll t$ choose $t''$
such that $t'\ll t''\ll t$, then there is $y\in S_2$ such that
$(x,y)\in R$, $(y,t'')\in Q{;}T$. The~latter implies that there is
$z\in S_3$ such that $(y,z)\in Q$, $(z,t')\in T$.

Similarly, let $(x,t)\in (R{;}Q){;}T$, then for all $t'\ll t$
choose $t'\ll t''\ll t$, and there is $z'\in S_3$ such that
$(x,z')\in R{;}Q$, $(z',t'')\in T$. Pseudo-invertibility of $T$
implies the~existence of $z\ll z'$ such that $(z,t')\in T$ as well.
There is also $y\in S_2$, $(x,y)\in R$, $(y,z)\in Q$.

On the~other hand, if, for $x\in S_1$, $t\in S_4$, elements $y\in
S_2$, $z\in S_3$ exist for all $t'\ll t$ such that $(x,y)\in R$,
$(y,z)\in Q$, $(z,t')\in T$, then $(x,t')\in RQT$, hence $(x,t')\in
R(Q{;}T)$ and $(x,t')\in (R{;}Q)T$, which in turn implies both
$(x,t)\in R{;}(Q{;}T)$ and $(x,t)\in (R{;}Q){;}T$. Thus
$R{;}(Q{;}T)=(R{;}Q){;}T$.
\end{proof}

It is easy to verify that, for a~continuous semilattice $S$ with
zero, the~relation $E_S=\{(x,y)\in S\times S\mid y\le x\}$ is
a~pseudo-invertible ambiguous representation that is a~neutral
element for composition. Thus:

\begin{proposition}
All continuous semilattices with bottom elements and all
pseudo-invertible ambiguous representation form a~category
$\Sempr$, which contain $\Semz$ as a~subcategory.
\end{proposition}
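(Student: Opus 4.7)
My plan is to finish the category verification by establishing three remaining facts: that each $E_S$ is a pseudo-invertible ambiguous representation; that it serves as a two-sided unit for the composition ``$;$''; and that the assignment $f \mapsto R_f$ with $R_f = \{(x,y) \mid y \le f(x)\}$ provides a faithful, identity-on-objects functor $\Semz \to \Sempr$. Associativity and closure of composition are already in hand from the preceding proposition and corollary, so only these pieces remain.

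First, I would check that $E_S$ is a pseudo-invertible ambiguous representation. The cut $xE_S = \{x\}\dns$ is a non-empty Scott closed lower set and $E_S y = \{y\}\ups$ is upper, so $E_S$ is an ambiguous representation. Pseudo-invertibility reduces to: given $y \le x$ and $y' \ll y$, produce $x' \ll x$ with $y' \le x'$. Continuity of $S$ furnishes this at once, since $x = \sup x\ddns$ is a directed sup and $y' \ll y \le x$ forces $y' \le x'$ for some $x' \in x\ddns$.

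Second, I would verify the identity laws. For $E_S; Q = Q$, the inclusion $Q \subset E_S; Q$ uses pseudo-invertibility of $Q$: for any $z' \ll z$ pick $x' \ll x$ with $(x', z') \in Q$ and take $y = x'$. Conversely, if $(x, z) \in E_S; Q$, then for each $z' \ll z$ some $y \le x$ has $(y, z') \in Q$, whence condition~(a) of the ambiguous representation upgrades this to $(x, z') \in Q$; since $xQ$ is Scott closed and $z = \sup z\ddns$ a directed sup, $z \in xQ$. The equality $R; E_S = R$ is simpler: one direction takes $y = z$ in the composition formula, the other uses condition~(a) of $R$ together with Scott closedness of $xR$.

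Third, for the embedding, isotonicity of $f$ makes $R_f$ an ambiguous representation, and Scott continuity combined with $f(x) = \sup_{x' \ll x} f(x')$ gives pseudo-invertibility exactly as in the first step. Clearly $R_{\id{S}} = E_S$, and $R_{g \circ f} = R_f; R_g$ reduces to the equivalence ``$z \le g(f(x))$ iff for every $z' \ll z$ there is $y \le f(x)$ with $z' \le g(y)$'', whose nontrivial direction uses isotonicity of $g$ and Scott closedness of the relevant cut. Faithfulness follows since $f(x) = \sup xR_f$. The step I expect to require the most care is the identity law $E_S; Q = Q$, because its two inclusions draw on condition~(a) and pseudo-invertibility of $Q$ in noticeably different ways, and one must combine Scott closedness of cuts with the continuity identity $z = \sup z\ddns$ to close the argument.
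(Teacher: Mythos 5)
Your proposal is correct and follows exactly the route the paper takes: the paper merely asserts that $E_S=\{(x,y)\mid y\le x\}$ is a pseudo-invertible neutral element and that $f\mapsto\{(x,y)\mid y\le f(x)\}$ gives the embedding of $\Semz$, leaving the verifications as ``easy''. You supply precisely those verifications, correctly, using continuity ($z=\sup z\ddns$), condition~(a), and Scott closedness of the cuts in the expected ways.
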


An~obvious embedding $\Semz\to\Sempr$ is of the~form: $IS=S$ for
an~object $S$, and $If=\{(x,y)\in S_1\times S_2\mid y\le f(x)\}$
for an~arrow $f:S_1\to S_2$.

We denote an~arrow $R$ from $S_1$ to $S_2$ in $\Sempr$ with
$R:S_1\Rightarrow S_2$ and use for the~composition of $R$ and $Q$
the~synonymic notations $R{;}Q$ (in direct order) and $Q\circ R$
(in reverse order) both in $\Semz$ and $\Sempr$.

\begin{proposition}
The~correspondence $(-)^\sms$ is an~involutive contravariant
functor (a~self-duality) in $\Sempr$, which is an~extension of
the~functor $(-)^\land$ in $\Semz$.
\end{proposition}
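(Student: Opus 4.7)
The plan is to define $(-)^\sms$ on objects by $S\mapsto\Hat S$ (identifying $\Hat S$ with $S^\land$ so that the fixed separating compatibility $P_S$ becomes evaluation) and on arrows by $R\mapsto R^\sms$. Four things need to be verified: (i)~$R^\sms\in\Ar(\Sempr)$ whenever $R$ is; (ii)~$E_S^\sms=E_{\Hat S}$ together with contravariant functoriality $(R{;}Q)^\sms=Q^\sms{;}R^\sms$; (iii)~involutivity $R^\ssms=R$; and (iv)~$(If)^\sms=I(f^\land)$ for every $\Semz$-arrow $f$, which is the extension statement.

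For (i), the monotonicity requirements (upper in the first coordinate, lower in the second) follow from a two-line check based on the defining formula. The formula
\[
\hat y R^\sms=\{x\in S_1\mid \hat y\in (xR)^\perp\}^\perp
\]
established in the proof of the earlier Proposition exhibits every cut as a transversal, hence Scott closed; the empty Scott open filter $\hat x=\emptyset$ lies vacuously in every cut, so non-emptiness is automatic. Pseudo-invertibility of $R^\sms$ is the implication, already remarked in the text, that $R^\ssms=R$ entails $(R^\sms)^\ssms=R^\sms$. This also delivers (iii), and the functorial equation in (ii) is exactly the Corollary above; the identity-preservation clause $E_S^\sms=E_{\Hat S}$ will drop out of (iv) applied to $f=\id S$, since $E_S=I(\id S)$.

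The main work is (iv). Fix $f:S_1\to S_2$ in $\Semz$; then $x(If)=f(x)\dns$, and under the identification $sP_S\hat s=1\Leftrightarrow s\in\hat s$ the defining formula of the pseudo-inverse reads
\[
(If)^\sms=\bigl\{(\hat y,\hat x)\in \Hat{S_2}\times \Hat{S_1}\bigm| x\in \hat x\Rightarrow \exists\,y\le f(x)\text{ with }y\in \hat y\bigr\}.
\]
Since $\hat y$ is an upper set in $S_2$, the inner existential simplifies to $f(x)\in\hat y$, hence
\[
(If)^\sms=\{(\hat y,\hat x)\mid \hat x\subseteq f^{-1}(\hat y)\}=\{(\hat y,\hat x)\mid \hat x\le f^\land(\hat y)\}=I(f^\land).
\]

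The only genuine subtlety I expect is the bookkeeping around the identifications of $\Hat S$ with $S^\land$ and of $sP_S\hat s$ with set-membership; once these are made explicit, the entire argument is a chain of short definition-unpackings, with all substantive content already carried out in the preceding Proposition and Corollary.
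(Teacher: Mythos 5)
Your proof is correct and follows exactly the route the paper intends: the proposition is stated there without proof, as an assembly of the preceding Proposition (involutivity $R^{\ssms}=R$ on pseudo-invertible representations) and Corollary (anti-multiplicativity $(R{;}Q)^\sms=Q^\sms{;}R^\sms$), and your items (i)--(iii) reproduce precisely that assembly. The only genuinely new content, the computation $(If)^\sms=I(f^\land)$ establishing the extension claim (and, via $f=\id{S}$, preservation of identities), is carried out correctly.
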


\section{Category of $L$-fuzzy ambiguous representations}

Now we extend the notion of ambiguous representation to
lattice-valued relations in the spirit of \cite{Nyk:CapLat:08} and
\cite{NykMyk:ConMes:10}.

\begin{definition}
Let $S_1,S_2$ be continuous semilattices with zeros $0_1$ and $0_2$
resp., $L$ a~completely distributive lattice with a~bottom element
$0$ and a~top element $1$. An~\emph{$L$-fuzzy ambiguous
representation} (or an~$L$-\emph{ambiguous representation} for
short) of $S_1$ in $S_2$ is a~ternary relation $R\subset S_1\times
S_2\times L$ such that

(a) if $(x,y,\alpha)\in R$, $x\le x'$ in $S_1$, $y'\le y$ in $S_2$,
and $\alpha'\le \alpha$ in $L$, then $(x',y',\alpha')\in R$ as
well;

(b) for all $x\in S_1$ the~set $xR=\{(y,\alpha)\in S_2\times L\mid
(x,y,\alpha)\in R\}$ is Scott closed in~$S_2\times L$ and contains
all the~elements of the forms $(0_2,\alpha )$ and $(y,0)$;

(c) for all $x\in S_1$, $y\in S_2$, $\alpha,\beta\in L$, if
$(x,y,\alpha)\in R$, $(x,y,\beta)\in R$, then
$(x,y,\alpha\lor\beta)\in R$.
\end{definition}

The following definition is equivalent.

\begin{definition}
For continuous semilattices $S_1,S_2$ with zeros and a~completely
distributive lattice $L$, a~ternary relation $R\subset S_1\times
S_2\times L$ is an~$L$-\emph{ambiguous representation} of $S_1$ in
$S_2$ if

(a${}'$) for all $y\in S_2$, $\alpha\in L$ the~set $Ry\alpha=\{x\in
S_1\mid (x,y,\alpha)\in R\}$ is an~upper set in~$S_1$;

(b${}'$) for all $x\in S_1$, $\alpha\in L$ the~set $xR\alpha=\{y\in
S_2\mid (x,y,\alpha)\in R\}$ is non-empty and Scott closed in
$S_2$;

(c${}'$) for all $x\in S_1$, $y\in S_2$ the~set $xyR=\{\alpha\in
L\mid (x,y,\alpha)\in R\}$ is non-empty, directed, and Scott closed
in $L$.
\end{definition}

Obviously, due to complete distributivity of $L$, (c${}'$) is
equivalent to any of the~following properties:

(c${}''$) for all $x\in S_1$, $y\in S_2$ the~set $xyR=\{\alpha\in
L\mid (x,y,\alpha)\in R\}$ is a~non-empty directed lower set such
that, if $\beta\in xyR$ for all $\beta\ll\alpha$, then $\alpha\in
xyR$; or

(c${}'''$) for all $x\in S_1$, $y\in S_2$ the~set $xyR$ is a~lower
set with a~greatest element (i.e., a~set of the form
$\{\alpha\}\dns$).

Observe also that (a${}'$)+(b${}'$) mean that, for all $\alpha\in
L$, the~cut $R\alpha=\{(x,y)\in S_1\times S_2\mid (x,y,\alpha)\in
R\}$ is a~(crisp) ambiguous representation of $S_1$ in $S_2$ as
defined in the~previous section. We will call it
the~$\alpha$-\emph{cut} of $R$ and denote $R_\alpha$.

We assume again that separating compatibilities $P_1:S_1\times
\Hat{S_1}\to\{0,1\}$, $P_2=S_2\times
\Hat{S_2}\to\{0,1\}$ are fixed.

\iffalse
With $\tilde L$ the~lattice $L$ with the~reverse order is denoted.
Obviously its bottom element is $\tilde 0=1$ and its top element is
$\tilde 1=0$.
\fi

For an~ambiguous representation $R\subset S_1\times S_2\times L$
define the~relation $R^\sms\subset \Hat{S_2}\times \Hat {S_1}\times
L$ through its $\alpha$-cuts as follows:
$$
(R^\sms)_\alpha=\bigcap_{\beta\ll\alpha}(R_\beta)^\sms,
$$
or, equivalently, with the~formulae
\begin{multline*}
R^\sms =
\bigl\{
(\hat y,\hat x,\alpha)\in \Hat{S_2}\times \Hat{S_1}
\mid
\text{if }\beta\ll\alpha
\text{ and }xP_1\hat x=1\text{ for some }x\in S_1,
\\
\text{then there is }(y,\beta)\in xR,\; yP_2\hat y=1
\bigr\},
\end{multline*}
or
\begin{multline*}
R^\sms =
\bigl\{
(\hat y,\hat x)\in \Hat{S_2}\times \Hat{S_1}
\mid
\text{if }\beta\ll\alpha
\text{ and }x\in S_1\text{ is such that }
yP_2\hat y=0
\\
\text{for all }(y,\beta)\in xR,
\text{then } xP_1\hat x=0
\bigr\}.
\end{multline*}

A~shorter formula uses transversals:
$$
\hat y (R^\sms)_\alpha=
\bigcap_{\beta\ll\alpha}
\bigl\{x\in S_1
\bigm|
\hat y\in (xR_\beta)^\perp
\bigr\}^\perp.
$$

\begin{proposition}
The~relation $R^\sms$ is an~$L$-ambiguous representation as well.
\end{proposition}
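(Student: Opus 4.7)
The plan is to verify the equivalent conditions (a'), (b'), (c') for $R^\sms$. Since each $R_\beta$ is a crisp ambiguous representation of $S_1$ in $S_2$ (apply the $L$-ambiguous representation axioms at a fixed level $\beta$), the results of Section~3 give that each $(R_\beta)^\sms$ is a crisp ambiguous representation of $\Hat{S_2}$ in $\Hat{S_1}$.

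The conditions (a') and (b') for $R^\sms$ are then routine. The set $R^\sms\Hat x\alpha=\bigcap_{\beta\ll\alpha}(R_\beta)^\sms\Hat x$ is an intersection of upper subsets of $\Hat{S_2}$, hence upper. The set $\Hat yR^\sms\alpha=\bigcap_{\beta\ll\alpha}\Hat y(R_\beta)^\sms$ is an intersection of Scott closed non-empty lower subsets of $\Hat{S_1}$, hence Scott closed and non-empty (each factor contains the bottom element $\varnothing$).

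The bulk of the work is (c'): for fixed $\Hat y$, $\Hat x$, the set $\Hat y\Hat xR^\sms$ must be non-empty, directed and Scott closed in $L$. Put $A=\{\beta\in L\mid(\Hat y,\Hat x)\in(R_\beta)^\sms\}$, so that $\alpha\in\Hat y\Hat xR^\sms$ iff $\alpha\ddns\subset A$. I would establish two properties of $A$. First, $A$ is a lower set: $\gamma\le\beta$ yields $R_\gamma\supset R_\beta$ by (a) of the $L$-ambiguous representation definition, and isotonicity of the crisp pseudo-inverse propagates the inclusion. Second, $A$ is closed under binary joins: condition (c) of the $L$-ambiguous representation together with antitonicity in $\beta$ gives $R_{\beta_1\lor\beta_2}=R_{\beta_1}\cap R_{\beta_2}$, and for crisp ambiguous representations one has the identity $(R_1\cap R_2)^\sms=R_1^\sms\cap R_2^\sms$. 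The non-trivial inclusion ``$\supset$'' in this second identity is the real obstacle: given $y_i\in xR_i$ with $y_iP_2\Hat y=1$ for $i=1,2$, the meet $y_1\land y_2$ lies in $x(R_1\cap R_2)$ because each $xR_i$ is a lower set in $S_2$, and $(y_1\land y_2)P_2\Hat y=1$ because $P_2$ is $\land$-distributive in the first variable, so one common witness serves both representations at once.

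With these two properties of $A$ in hand, the conclusion follows by standard continuous-lattice arguments. The interpolation property $\beta\ll\alpha\Rightarrow\exists\beta'\,(\beta\ll\beta'\ll\alpha)$ in $L$ shows that $\{\alpha\mid\alpha\ddns\subset A\}$ is closed under directed suprema: given a directed family $(\alpha_j)$ with supremum $\alpha$ inside this set and $\beta\ll\alpha$, interpolate $\beta\ll\beta'\ll\alpha$, find $j$ with $\beta'\le\alpha_j$, and conclude $\beta\ll\alpha_j$, so $\beta\in A$. Combined with the lower-set property this gives Scott closedness, and directedness of $\Hat y\Hat xR^\sms$ is inherited from that of $A$ (closure under binary joins passes through Scott closure). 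Non-emptiness and the boundary condition $(R^\sms)_0=\Hat{S_2}\times\Hat{S_1}$ come automatically from the paper's convention that the empty set is directed: then $\{\beta\ll 0\}=\varnothing$, so $0\ddns=\varnothing\subset A$ holds vacuously and $0\in\Hat y\Hat xR^\sms$.
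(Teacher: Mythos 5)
Your proof is correct and follows essentially the same route as the paper's: (a$'$) and (b$'$) come from the crisp theory applied cut-by-cut, Scott closedness of $\hat y\hat x R^\sms$ comes from interpolation of $\ll$ in the completely distributive lattice $L$, and directedness comes from producing a single witness $y_1\land y_2$ serving two levels at once. Your lemma $(R_1\cap R_2)^\sms=R_1^\sms\cap R_2^\sms$ is just a repackaging of the paper's inline transversal computation $\bigl\{x\bigm|\hat y\in(xR_{\alpha'})^\perp\text{ or }\hat y\in(xR_{\beta'})^\perp\bigr\}^\perp\subset\hat y(R_{\alpha'\lor\beta'})^\sms$, with your meet-of-witnesses step supplying the justification for the inclusion that the paper leaves implicit, and your explicit appeal to the empty-directed-set convention at $\alpha=0$ is at least as careful as the paper's bare assertion that $0\in\hat y\hat xR^\sms$.
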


\begin{proof}
For the~intersection of crisp ambiguous representations is a~crisp
ambiguous representation as well, (a${}'$)+(b${}'$) for $R^\sms$
are immediate. To verify c${}''$, assume that $\beta\in
\hat y\hat xR^\sms$ for all $\beta\ll\alpha$, i.e.,
$(\hat y,\hat x)\in\bigcap_{\beta\ll\alpha}
\bigcap_{\gamma\ll\beta}(R_\gamma)^\sms$. In a~completely
distributive lattice $L$ we have $\gamma\ll\alpha$ if and only if
there is $\beta$ such that $\gamma\ll\beta\ll\alpha$, hence $(\hat
y,\hat x)\in\bigcap_{\gamma\ll\alpha}(R_\gamma)^\sms$, which
implies $\alpha\in \hat y\hat x R^\sms$.

Obviously $\hat y\hat x R^\sms$ is a~lower set that contains $0$.
Show that it is directed. If $\alpha,\beta\in \hat y\hat xR^\sms$,
then for all $\gamma\ll\alpha\lor\beta$ there are
$\alpha'\ll\alpha$, $\beta'\ll\beta$ such that
$\gamma\le\alpha'\lor\beta'$. Therefore
\begin{align*}
\hat x&\in
\hat y(R_{\alpha'})^\sms\cap \hat y(R_{\beta'})^\sms
\\
&=
\bigl\{x\in S_1
\bigm|
\hat y\in (xR_{\alpha'})^\perp
\bigr\}^\perp
\cap
\bigl\{x\in S_1
\bigm|
\hat y\in (xR_{\beta'})^\perp
\bigr\}^\perp
\\
&=
\bigl\{x\in S_1
\bigm|
\hat y\in (xR_{\alpha'})^\perp
\text{ or }
\hat y\in (xR_{\beta'})^\perp
\bigr\}^\perp
\\
&\subset
\bigl\{x\in S_1
\bigm|
\hat y\in (xR_{\alpha'\lor\beta'})^\perp
\bigr\}^\perp
=\hat y (R_{\alpha'\lor \beta'})^\sms
\subset \hat y (R_{\gamma})^\sms.
\end{align*}

Thus $\alpha\lor\beta\in \hat y\hat x R^\sms$, and the~latter set
is directed in~$L$.
\end{proof}

Hence $R^\ssms=(R^\sms)^\sms\subset S_1\times S_2\times L$ is
an~$L$-ambiguous representation as well.

\begin{proposition}
For each ambiguous representation $R\subset S_1\times S_2$
the~inclusion $R^\ssms\subset R$ holds. The~equality $R=R^\ssms$ is
equivalent to the~condition

{\rm (d)} if $(x,y,\alpha)\in R$, $y'\ll y$ in $S_2$, and
$\alpha'\ll\alpha$ in $L$, then there is $x'\ll x$ in $S$ such that
$(x',y',\alpha')\in R$.
\end{proposition}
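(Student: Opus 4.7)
The plan is to reduce the $L$-fuzzy statement to the crisp proposition just proved by working one $\alpha$-cut at a time. Two preparatory observations enable the reduction. First, since for each $(x,y)$ the set $xyR$ is a Scott-closed lower set in the continuous lattice $L$ with a greatest element (property (c${}'''$)), one has the identity $R_\alpha=\bigcap_{\beta\ll\alpha}R_\beta$. Second, the crisp proposition gives the explicit formula $\bar x(R_\beta)^\ssms=\Cl\bigl(\bigcup_{x\ll\bar x}xR_\beta\bigr)$ for every $\bar x\in S_1$ and every $\beta\in L$.

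The technical core is to establish, for each fixed $\beta\in L$, the identity of crisp ambiguous representations
$$((R^\sms)_\beta)^\sms=(R_\beta)^\ssms.$$
Starting from $\hat y(R^\sms)_\beta=\bigcap_{\gamma\ll\beta}\{x:\hat y\in(xR_\gamma)^\perp\}^\perp$ and applying the filtered-intersection transversal identity to the filtered family of closed lower sets $\{\{x:\hat y\in(xR_\gamma)^\perp\}^\perp\}_{\gamma\ll\beta}$, I would first show that $\bar x\in(\hat y(R^\sms)_\beta)^\perp$ unfolds to ``for every $x\ll\bar x$ there is $\gamma\ll\beta$ with $\hat y\in(xR_\gamma)^\perp$''. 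Substituting into $\bar x((R^\sms)_\beta)^\sms=\{\hat y:\bar x\in(\hat y(R^\sms)_\beta)^\perp\}^\perp$ leaves the transversal of the lower set $A=\{\hat y:\forall x\ll\bar x,\exists\gamma\ll\beta,\hat y\in(xR_\gamma)^\perp\}$. Using $A^\perp=(\Cl A)^\perp$ together with the description of Scott closure in a continuous domain as ``$\hat y\in\Cl(A)\iff\forall\hat y'\ll\hat y,\hat y'\in A$'' and the commutativity of the quantifiers $\forall x\ll\bar x$ and $\forall\hat y'\ll\hat y$, I identify $\Cl A$ with $\bigcap_{x\ll\bar x}(xR_\beta)^\perp$. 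A second application of the filtered-intersection transversal identity then delivers $A^\perp=\Cl\bigl(\bigcup_{x\ll\bar x}xR_\beta\bigr)=\bar x(R_\beta)^\ssms$.

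With the identity in hand, $(R^\ssms)_\alpha=\bigcap_{\beta\ll\alpha}(R_\beta)^\ssms$, so the inclusion $R^\ssms\subset R$ follows by intersecting the crisp inclusions $(R_\beta)^\ssms\subset R_\beta$ over $\beta\ll\alpha$ and invoking $R_\alpha=\bigcap_{\beta\ll\alpha}R_\beta$. For the equivalence, the explicit formula $\bar x(R_\beta)^\ssms=\Cl\bigl(\bigcup_{x\ll\bar x}xR_\beta\bigr)$ shows that membership $(\bar x,\bar y)\in(R_\beta)^\ssms$ means precisely ``for every $y'\ll\bar y$ there is $x'\ll\bar x$ with $(x',y',\beta)\in R$''. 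Hence condition (d) is literally the assertion $R_\alpha\subset(R_\beta)^\ssms$ for all $\beta\ll\alpha$, i.e.\ $R_\alpha\subset(R^\ssms)_\alpha$ for every $\alpha$, i.e.\ $R\subset R^\ssms$; combined with the reverse inclusion this gives $R=R^\ssms$, and the converse direction reads off the same translation.

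The main obstacle is the nested transversal manipulation behind the identity $((R^\sms)_\beta)^\sms=(R_\beta)^\ssms$: the set $A$ whose transversal we want is not a priori closed, and the crucial step is recognizing that its Scott closure coincides with the natural closed candidate $\bigcap_{x\ll\bar x}(xR_\beta)^\perp$. This hinges on the quantifier swap made legitimate by the continuity of both $L$ and the dcpo's $S_1,\hat S_2$, and on the repeated use of the filtered-intersection-versus-closure-of-union duality from the preliminaries.
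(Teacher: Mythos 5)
Your proof is correct, and it uses the same transversal calculus as the paper (the filtered-intersection/closure-of-union duality and the description of Scott closure of a lower set via way-below), but it organizes the argument differently in a way worth noting. The paper runs one monolithic computation of $x(R^\ssms)_\alpha$, and its key quantifier-swap step (the equality marked with an asterisk) is justified only \emph{after} intersecting over $\beta\ll\alpha$, using interpolation $\gamma\ll\beta\ll\alpha$ in the completely distributive lattice $L$. You instead isolate the termwise identity $((R^\sms)_\beta)^\sms=(R_\beta)^\ssms$ for each fixed $\beta$, which is genuinely sharper: the set $A=\{\hat y\mid\forall x\ll\bar x\ \exists\gamma\ll\beta:\hat y\in(xR_\gamma)^\perp\}$ is strictly smaller than $B=\bigcap_{x\ll\bar x}(xR_\beta)^\perp$ in general, but your observation that $\Cl A=B$ (via the swap of the two universal quantifiers $\forall x\ll\bar x$ and $\forall\hat y'\ll\hat y$, plus $xR_\beta=\bigcap_{\gamma\ll\beta}xR_\gamma$) makes the transversals agree without any appeal to interpolation in $L$ at that stage. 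This buys a clean modular structure --- the proposition reduces to the crisp case plus the two identities $R_\alpha=\bigcap_{\beta\ll\alpha}R_\beta$ and $(R^\ssms)_\alpha=\bigcap_{\beta\ll\alpha}(R_\beta)^\ssms$ --- and the translation of condition (d) into $R_\alpha\subset(R_{\alpha'})^\ssms$ for all $\alpha'\ll\alpha$ then falls out of the crisp formula $\bar x(R_\beta)^\ssms=\Cl\bigl(\bigcup_{x\ll\bar x}xR_\beta\bigr)$ exactly as you say. The paper's route is shorter on the page but hides the cut-compatibility of the double pseudo-inverse, which your version makes explicit and which could be reused elsewhere.
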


\begin{proof}
Recall that
$$
\hat y (R^\sms)_\beta
=
\bigcap_{\gamma\ll\beta}
\bigl\{x\in S_1
\bigm|
\hat y\in (xR_\gamma)^\perp
\bigr\}^\perp
=
\bigl(
\bigcup_{\gamma\ll\beta}
\bigl\{x\in S_1
\bigm|
\hat y\in (xR_\gamma)^\perp
\bigr\}
\bigr)^\perp.
$$

Hence
\begin{align*}
x (R^\ssms)_\alpha
&=
\bigcap_{\beta\ll\alpha}
\bigl\{\hat y\in \hat S_2
\bigm|
x\in (\hat y (R^\sms)_\beta)^\perp
\bigr\}^\perp
\\
&=
\bigcap_{\beta\ll\alpha}
\Bigl\{\hat y\in \hat S_2
\bigm|
x\in
\bigl(
\bigcup_{\gamma\ll\beta}
\bigl\{x'\in S_1
\bigm|
\hat y\in (x'R_\gamma)^\perp
\bigr\}
\bigr)^{\perp\perp}
\Bigr\}^\perp
\\
&\text{\ \ \ \ \ (double transversal is Scott closure)}
\\
&=
\bigcap_{\beta\ll\alpha}
\Bigl\{\hat y\in \hat S_2
\bigm|
x\in
\Cl\bigl(
\bigcup_{\gamma\ll\beta}
\bigl\{x'\in S_1
\bigm|
\hat y\in (x'R_\gamma)^\perp
\bigr\}
\bigr)
\Bigr\}^\perp
\\
&\text{\mbox{}\hskip-2em(Scott closure of a lower set $A$ consists of all points approximated by elements of $A$)}
\\
&=
\bigcap_{\beta\ll\alpha}
\Bigl\{\hat y\in \hat S_2
\bigm|
\text{for all }x'\ll x
\text{ there is }\gamma\ll\beta
\text{ such that }
\hat y\in (x' R_\gamma)^\perp
\Bigr\}^\perp
\\
&\mathrel{\overset*=}
\bigcap_{\beta\ll\alpha}
\Bigl\{\hat y\in \hat S_2
\bigm|
\text{for all }x'\ll x
\quad
\hat y\in (x' R_\beta)^\perp
\Bigr\}^\perp
\\
&=
\bigcap_{\beta\ll\alpha}
\Bigl(
\{x\}\ddns R_\beta\Bigr)^{\perp\perp}
\subset
\bigcap_{\beta\ll\alpha}
\bigl(
x R_\beta\bigr)^{\perp\perp}
=
\bigcap_{\beta\ll\alpha}
\bigl(
x R_\beta\bigr)
=
xR_\alpha,
\end{align*}
therefore $R^\ssms\subset R$. Clarify why the~``$=$''
sign with an~asterisk is valid. Obviously, if $\gamma\ll\beta$ and $\hat y\in (x'
R_\gamma)^\perp$, then $\hat y\in (x' R_\beta)^\perp$, and
$({-})^\perp$ is antitone, hence ``$\supset$'' is immediate. On
the~other hand,
\begin{multline*}
\text{there is }\gamma\ll\beta
\text{ such that for all }x'\ll x
\text{ \ }
\hat y\in (x' R_\gamma)^\perp
\\
\implies
\text{for all }x'\ll x
\text{ there is }\gamma\ll\beta
\text{ such that }
\hat y\in (x' R_\gamma)^\perp,
\end{multline*}
hence
\begin{gather*}
\bigcap_{\beta\ll\alpha}
\Bigl\{\hat y\in \hat S_2
\bigm|
\text{for all }x'\ll x
\text{ there is }\gamma\ll\beta
\text{ such that }
\hat y\in (x' R_\gamma)^\perp
\Bigr\}^\perp
\\
\subset
\bigcap_{\beta\ll\alpha}
\Bigl\{\hat y\in \hat S_2
\bigm|
\text{there is }\gamma\ll\beta
\text{ such that for all }x'\ll x
\text{ \ }
\hat y\in (x' R_\gamma)^\perp
\Bigr\}^\perp
\\
=
\bigcap_{\gamma\ll\beta\ll\alpha}
\Bigl\{\hat y\in \hat S_2
\bigm|
\text{for all }x'\ll x
\text{ \ }
\hat y\in (x' R_\gamma)^\perp
\Bigr\}^\perp
\\
=
\bigcap_{\gamma\ll\alpha}
\Bigl\{\hat y\in \hat S_2
\bigm|
\text{for all }x'\ll x
\text{ \ }
\hat y\in (x' R_\gamma)^\perp
\Bigr\}^\perp,
\end{gather*}
therefore ``$\subset$'' (after renaming $\gamma$ with $\beta$) is
also obtained.

It is also clear that, for $R^\ssms=R$ to be valid, it is
necessary and sufficient that the~only ``$\subset$'' in the~above
sequence is ``$=$'', i.e., each $y\in xR_\alpha$ must be a~closure
point of all lower sets $\{x\}\ddns R_\beta$ for
$\beta\ll\alpha$, which in fact is (d).
\end{proof}

The~$L$-ambiguous representations $R$ that satisfy $R=R^\ssms$ are
also called \emph{pseudo-invertible}.

To define composition of $L$-ambiguous representations, we need
an~additional operation ${*}:L\times L\to L$ that makes $L=(L,*)$
a~\emph{unital quantale}, i.e., this operation is infinitely
distributive w.r.t.\ supremum in both variables (hence Scott
continuous) and $1$ is a~two sided unit for~``${*}$''. Note that
\emph{commutativity} is not demanded, hence from now on ``${*}$'' is
a~(possibly) noncommutative lower semicontinuous conjunction for
an~$L$-valued fuzzy logic. The~operation $\alpha\hast\beta\equiv
\beta*\alpha$ satisfies the~same requirements, hence $\hat L=(L,{\hast})$
is a~unital quantale as well.

Then, for $L$-ambiguous representations $R\subset S_1\times
S_2\times L$ and $Q\subset S_2\times S_3\times L$, the~composition
$R*Q$ can be defined in a~manner usual for $L$-fuzzy relations:
\begin{multline*}
R*Q=\bigl\{(x,z,\alpha)\in S_1\times S_3\times L
\mid
\\
\alpha\le\sup\{\beta*\gamma\mid
\text{there is }y\in S_3
\text{ such that }(x,y,\beta)\in R,(y,z,\gamma)\in Q\}
\bigr\}.
\end{multline*}

Similarly to the~case of crisp ambiguous representations, this
composition is associative, but $R*Q$ can fail to be
an~$L$-ambiguous representation, namely (b) is not always valid.
Therefore we ``improve'' the~composition as follows: $R{\astsc}
Q\subset S_1\times S_3\times L$ is such that $xR{\astsc}
Q=\Cl(xR*Q)$ for all $x\in S_1$. Here is an~expanded version of
the~latter definition: for $x\in S_1$, $z\in S_3$, $\alpha\in L$ we
have $(x,z,\alpha)\in R{\astsc} Q$ if and only if for all $z'\ll
z$, $\alpha'\ll \alpha$ there are $n\in\BBN$, $y_1,\dots,y_n\in
S_2$, $\beta_1,\gamma_1,\dots,\beta_n,\gamma_n\in L$ such that
\begin{gather*}
(x,y_1,\beta_1),\dots,(x,y_n,\beta_n)\in R,\;
(y_1,z',\gamma_1),\dots,(y_n,z',\gamma_n)\in Q,
\\
\beta_1*\gamma_1\lor\dots\lor\beta_n*\gamma_n\ge\alpha'.
\end{gather*}

The~simplest operation ``$*$'' that obviously satisfies the~above
conditions is the~lattice meet ``$\land$''. In this case for
``$\astsc$'' we use the~denotation~``$;$''.

For an~$L$-ambiguous representation $R$ we regard $R^\sms$ as
a~$\hat L$-ambiguous representation, and use ``$\hastsc$'' for
the~compositions of such representations.

\begin{lemma}
For all $L$-ambiguous representations $R\subset S_1\times S_2\times
L$, $Q\subset S_2\times S_3\times L$ the~inclusion
$Q^\sms{\hastsc}R^\sms\subset (R{\astsc}Q)^\sms$ holds.
\end{lemma}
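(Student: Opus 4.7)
The plan is to imitate the proof of the crisp analogue almost verbatim, reducing first to the ``unimproved'' compositions $\hat{*}$ and $*$, and then building the required witness from the filter structure of the polars of $\hat z$. The first step would be to observe that for every $\hat z\in\hat{S_3}$ the $\hat z$-cut of $Q^\sms\hastsc R^\sms$ is, by the very definition of $\hastsc$, the Scott closure $\Cl_{\hat{S_1}\times L}(\hat z(Q^\sms\hat{*} R^\sms))$, while the $\hat z$-cut of $(R\astsc Q)^\sms$ is already Scott closed by condition~(b) for $\hat L$-ambiguous representations. Exactly as in the crisp lemma, it is therefore enough to prove the pointwise inclusion $Q^\sms\hat{*} R^\sms\subset(R\astsc Q)^\sms$, and everything below is aimed at this.

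For the main step I would fix $(\hat z,\hat x,\alpha)\in Q^\sms\hat{*} R^\sms$ and, using $((R\astsc Q)^\sms)_\alpha=\bigcap_{\beta\ll\alpha}((R\astsc Q)_\beta)^\sms$, fix also $\beta\ll\alpha$. The goal reduces to producing, for any $x\in S_1$ with $xP_1\hat x=1$, a single $z$ with $(x,z,\beta)\in R\astsc Q$ and $zP_3\hat z=1$. The definition of $\hat{*}$ gives $\alpha\le\sup\{\delta *\gamma\mid\exists\hat y\colon(\hat z,\hat y,\gamma)\in Q^\sms,\ (\hat y,\hat x,\delta)\in R^\sms\}$. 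Interpolating $\beta\ll\beta''\ll\alpha$ in the continuous lattice $L$, approximating the sup by finite joins, and then using Scott continuity of $*$ in both variables to refine each factor, I would produce finitely many witnesses $\hat y_1,\dots,\hat y_n$ with $(\hat z,\hat y_j,\gamma_j)\in Q^\sms$, $(\hat y_j,\hat x,\delta_j)\in R^\sms$ and strict approximants $\delta_j'\ll\delta_j$, $\gamma_j'\ll\gamma_j$ such that $\beta\le\bigvee_j\delta_j'*\gamma_j'$.

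Applying the defining condition of $R^\sms$ with $\delta_j'\ll\delta_j$ gives, for each $j$, some $y_j\in xR_{\delta_j'}$ with $y_jP_2\hat y_j=1$; applying the defining condition of $Q^\sms$ with $\gamma_j'\ll\gamma_j$ then gives $z_j\in y_jQ_{\gamma_j'}$ with $z_jP_3\hat z=1$. The main obstacle is that the $z_j$ are a~priori distinct, while I need a single $z$. This is to be overcome using Proposition~\ref{st.polar-SS}: the set $\{z\in S_3\mid zP_3\hat z=1\}$ is a Scott open filter in $S_3$ and hence closed under finite meets, so $z:=z_1\land\dots\land z_n$ still satisfies $zP_3\hat z=1$; downward closure of $Q$ in its second argument then gives $(y_j,z,\gamma_j')\in Q$ for every $j$. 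Plugging the triples $(x,y_j,\delta_j')\in R$ and $(y_j,z,\gamma_j')\in Q$ into the formula for $R*Q$ and recalling $\beta\le\bigvee_j\delta_j'*\gamma_j'$ yields $(x,z,\beta)\in R*Q\subset R\astsc Q$, which finishes the pointwise inclusion and, by the reduction of the first paragraph, the lemma.
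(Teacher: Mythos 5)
Your proof is correct and follows essentially the same route as the paper's: extract finitely many witnesses $\hat y_j,\gamma_j,\delta_j$ from the supremum defining the composition, refine them to way-below approximants via Scott continuity of $*$ and $\lor$, pull these back through the defining conditions of $R^\sms$ and $Q^\sms$, and replace the several $z_j$ by their meet $z_1\land\dots\land z_n$, which still satisfies $zP_3\hat z=1$. The only (harmless) organizational difference is that you first reduce to the unimproved composition $\hat *$ exactly as in the crisp lemma, whereas the paper works directly with $\hastsc$ and therefore carries the extra approximants $\hat x'\ll\hat x$ and $\alpha'\ll\alpha''\ll\alpha$ through the whole argument.
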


\begin{proof}
Let $(\hat z,\hat x,\alpha)\in Q^\sms{\hastsc}R^\sms$. For each
$\alpha'\ll\alpha$ we can choose $\alpha''$ such that
$\alpha'\ll\alpha''\ll\alpha$, then for all $\hat x'\ll
\hat x$ there are $n\in\BBN$, $\hat y_1,\dots,\hat y_n\in S_2$,
$\gamma_1,\beta_1,\dots,\gamma_n,\beta_n\in L$ such that
\begin{gather*}
(\hat z,\hat y_1,\gamma_1),\dots,(\hat z,\hat y_n,\gamma_n)\in Q^\sms,\;
(\hat y_1,\hat x',\beta_1),\dots,(\hat y_n,\hat x',\beta_n)\in R^\sms,
\\
\gamma_1\hast\beta_1\lor\dots\lor\gamma_n\hast\beta_n=
\beta_1*\gamma_1\lor\dots\lor\beta_n*\gamma_n\ge\alpha''.
\end{gather*}

For all $x\in S_1$ such that $xP_1\hat x'=1$, and all
$\beta_1'\ll\beta_1$, \dots, $\beta_n'\ll\beta_n$ there are
$y_1,\dots,y_n\in S_2$ such that
$(x',y_1,\beta'_1),\dots,(x',y_n,\beta'_n)\in R$, $y_1P_2\hat
y_1=\dots=y_nP_2\hat y_n=1$. Analogously, for all
$\gamma_1'\ll\gamma_1$, \dots, $\gamma_n'\ll\gamma_n$ there are
$z_1,\dots,z_n\in S_3$ such that
$(y_1,z_1,\gamma'_1),\dots,(y_n,z_n,\gamma_n')\in Q$, $z_1P_3\hat
z=\dots=z_nP_3\hat z=1$. Then the~element $z=z_1\land\dots\land
z_n$ satisfies $zP_3\hat z=1$ and
$(y_1,z,\gamma'_1),\dots,(y_n,z,\gamma_n')\in Q$ as well.

Obviously
$(x,z,\beta_1'*\gamma_1'\lor\dots\lor\beta_n'*\gamma_n')\in R*Q$.
Due to Scott continuity (i.e., lower semicontinuity) of $*$ and
$\lor$, we can choose the~above
$\beta'_1,\dots,\beta_n',\gamma_1',\dots,\gamma_n'$ so that
$\beta_1'*\gamma_1'\lor\dots\lor\beta_n'*\gamma_n'\ge\alpha'$.
Hence for all $\alpha'\ll\alpha$, $\hat x'\ll\hat x$, $x\in S_1$,
$xP_1\hat x'$ there is $z\in S_3$ such that $zP_3\hat z=1$,
$(x,z,\alpha')\in R{\astsc}Q$, i.e., $(\hat z,\hat x,\alpha)\in
(R{\astsc}Q)^\sms$.

\end{proof}

\emph{Mutatis mutandis} we obtain an~analogue of a~statement for
crisp ambiguous representations:

\begin{corollary}
Let $L$-ambiguous representations $R\subset S_1\times S_2\times L$,
$Q\subset S_2\times S_3\times L$ be pseudo-invertible. Then
$Q^\sms{\hastsc}R^\sms= (R{\astsc}Q)^\sms$, and the~composition
$R{\astsc}Q$ is pseudo-invertible as well.
\end{corollary}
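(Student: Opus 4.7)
The plan is to follow the crisp-case proof essentially verbatim, using the preceding lemma together with the isotonicity of $(-)^\sms$. Isotonicity is immediate from the formula $\hat y(R^\sms)_\alpha=\bigcap_{\beta\ll\alpha}\{x\in S_1\mid \hat y\in(xR_\beta)^\perp\}^\perp$: an inclusion $R\subset R'$ enlarges each cut $xR_\beta$, shrinks $(xR_\beta)^\perp$, enlarges $\{x\mid \hat y\in(xR_\beta)^\perp\}$, and so shrinks its transversal, a direction that is preserved by intersection over $\beta\ll\alpha$.

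I would then invoke the preceding lemma twice. First, directly on $R,Q$, giving $Q^\sms\hastsc R^\sms\subset(R\astsc Q)^\sms$. Second, in the dual $\hat L$-setting (legitimate because $\hat L=(L,\hast)$ is also a unital quantale, and $R^\sms,Q^\sms$ are $\hat L$-ambiguous representations of the appropriate types), applied to the $\hat L$-composition $Q^\sms\hastsc R^\sms$, which yields $R^\ssms\astsc Q^\ssms\subset(Q^\sms\hastsc R^\sms)^\sms$; pseudo-invertibility of $R$ and $Q$ collapses the left side to $R\astsc Q$. Applying the isotone $(-)^\sms$ to the first inclusion produces $(Q^\sms\hastsc R^\sms)^\sms\subset(R\astsc Q)^\ssms$. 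Chaining these facts together,
\[
R\astsc Q\ \subset\ (Q^\sms\hastsc R^\sms)^\sms\ \subset\ (R\astsc Q)^\ssms,
\]
and the preceding proposition supplies the automatic reverse inclusion $(R\astsc Q)^\ssms\subset R\astsc Q$, so all three sets coincide. In particular $R\astsc Q=(R\astsc Q)^\ssms$, which is pseudo-invertibility, together with the equality $R\astsc Q=(Q^\sms\hastsc R^\sms)^\sms$.

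Applying $(-)^\sms$ once more to the latter equality gives $(R\astsc Q)^\sms=(Q^\sms\hastsc R^\sms)^\ssms$. Running the same argument on the pseudo-invertible pair $(Q^\sms,R^\sms)$ in the $\hat L$-setting establishes that $Q^\sms\hastsc R^\sms$ is itself pseudo-invertible, so $(Q^\sms\hastsc R^\sms)^\ssms=Q^\sms\hastsc R^\sms$, yielding the claimed equality. The only point that really requires care, which I flag as the main obstacle, is the dual invocation of the preceding lemma: one must check that its proof is genuinely self-dual under interchange of $*$ and $\hast$. This is built into the paper's conventions, since both are Scott continuous associative multiplications with two-sided unit $1$ and the lemma's proof nowhere invokes commutativity, so this step reduces to bookkeeping rather than genuine computation.
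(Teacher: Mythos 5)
Your argument is correct and is essentially the paper's own: the paper proves this corollary \emph{mutatis mutandis} from the crisp case, whose proof is exactly your chain $R\astsc Q=R^\ssms\astsc Q^\ssms\subset(Q^\sms\hastsc R^\sms)^\sms\subset(R\astsc Q)^\ssms\subset R\astsc Q$ followed by one more application of $(-)^\sms$ together with the pseudo-invertibility of $Q^\sms\hastsc R^\sms$. (One cosmetic slip in your isotonicity remark: shrinking $(xR_\beta)^\perp$ \emph{shrinks} the set $\{x\mid\hat y\in(xR_\beta)^\perp\}$, whose transversal therefore \emph{enlarges}; the net conclusion, that $(-)^\sms$ is isotone, is the one you correctly use.)
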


\begin{proposition}
Composition ``${\astsc}$'' of the~pseudo-invertible $L$-ambiguous
representations is associative.
\end{proposition}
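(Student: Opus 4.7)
My plan is to imitate the crisp argument by introducing an auxiliary ``triple composition'' that both iterated compositions will be shown to equal. Define $\langle R,Q,T\rangle\subset S_1\times S_4\times L$ as the set of all $(x,t,\alpha)$ such that for every $t'\ll t$ in $S_4$ and every $\alpha'\ll\alpha$ in $L$ there exist $n\in\BBN$, elements $y_i\in S_2$, $z_i\in S_3$, and $\beta_i,\gamma_i,\delta_i\in L$ ($1\le i\le n$) with $(x,y_i,\beta_i)\in R$, $(y_i,z_i,\gamma_i)\in Q$, $(z_i,t',\delta_i)\in T$, and
\[
\beta_1*\gamma_1*\delta_1\lor\dots\lor\beta_n*\gamma_n*\delta_n\ge\alpha'.
\]
Associativity of $*$ makes the triple product unambiguous. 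Once the identifications $R{\astsc}(Q{\astsc}T)=\langle R,Q,T\rangle=(R{\astsc}Q){\astsc}T$ are established, associativity of ${\astsc}$ is immediate.

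For $R{\astsc}(Q{\astsc}T)\subset\langle R,Q,T\rangle$ I fix approximants $t'\ll t$, $\alpha'\ll\alpha$ and use interpolation of $\ll$ (available in the continuous semilattice $S_4$ and, since $L$ is completely distributive, in $L$) to produce $t'\ll t''\ll t$, $\alpha'\ll\alpha''\ll\alpha$. The definition of $R{\astsc}(Q{\astsc}T)$ at $(t'',\alpha'')$ yields $(x,y_i,\beta_i)\in R$ and $(y_i,t'',\zeta_i)\in Q{\astsc}T$ with $\bigvee_i\beta_i*\zeta_i\ge\alpha''$. Scott continuity of $*$ and $\lor$ lets me select $\zeta_i'\ll\zeta_i$ with $\bigvee_i\beta_i*\zeta_i'\ge\alpha'$, and unpacking each $(y_i,t'',\zeta_i)\in Q{\astsc}T$ at $(t',\zeta_i')$ delivers the required $z_{ij},\gamma_{ij},\delta_{ij}$; distributivity of $*$ over $\lor$ then gives the triple-product bound. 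The symmetric inclusion $(R{\astsc}Q){\astsc}T\subset\langle R,Q,T\rangle$ proceeds in the same way, but here the outer definition supplies $(z_i,t'',\delta_i)\in T$ living at $t''$ rather than at $t'$, whereas $\langle R,Q,T\rangle$ demands $T$-witnesses living at $t'$ \emph{and} sharing the $z$-coordinate with the $Q$-witnesses. This is the one place where pseudo-invertibility is essential: condition~(d) applied to $(z_i,t'',\delta_i)\in T$, $t'\ll t''$, $\delta_i'\ll\delta_i$ produces $z_i'\ll z_i$ with $(z_i',t',\delta_i')\in T$, after which the $R{\astsc}Q$-definition at $(z_i',\eta_i')$ (for suitable $\eta_i'\ll\eta_i$) yields the compatible $R$- and $Q$-witnesses.

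The reverse inclusions $\langle R,Q,T\rangle\subset R{\astsc}(Q{\astsc}T)$ and $\langle R,Q,T\rangle\subset(R{\astsc}Q){\astsc}T$ need no pseudo-invertibility. Given $(x,t,\alpha)\in\langle R,Q,T\rangle$, I fix approximants $t'\ll t$, $\alpha'\ll\alpha$ and take the $\langle R,Q,T\rangle$-witnesses $y_i,z_i,\beta_i,\gamma_i,\delta_i$ at $(t',\alpha')$. I verify that $(y_i,t',\gamma_i*\delta_i)\in Q{\astsc}T$ by a one-term argument: for any $\tilde t\ll t'$ and $\eta\ll\gamma_i*\delta_i$, the pair $(y_i,z_i,\gamma_i)\in Q$ and $(z_i,\tilde t,\delta_i)\in T$ (the latter by downward closure of $T$ in its middle coordinate) is a $Q*T$-witness with $\gamma_i*\delta_i\ge\eta$. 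Then $(x,y_i,\beta_i)\in R$ paired with $(y_i,t',\gamma_i*\delta_i)\in Q{\astsc}T$ and the identity $\bigvee_i\beta_i*(\gamma_i*\delta_i)=\bigvee_i\beta_i*\gamma_i*\delta_i\ge\alpha'$ witness $(x,t,\alpha)\in R{\astsc}(Q{\astsc}T)$. The inclusion into $(R{\astsc}Q){\astsc}T$ is symmetric, with $(x,z_i,\beta_i*\gamma_i)\in R{\astsc}Q$ verified by the analogous one-term argument using downward closure of $R$ in its second coordinate.

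The principal obstacle is the bookkeeping forced by the finite joins intrinsic to ${\astsc}$: every unpacking of ${\astsc}$ or application of condition (d) must propagate approximations of \emph{all} currently active parameters $\beta_i,\gamma_i,\delta_i$ simultaneously while preserving the inequality $\bigvee\ldots\ge\alpha'$. The technical lemma required---``if $\alpha'\ll\bigvee_i\beta_i*\delta_i$ then $\alpha'\le\bigvee_i\beta_i'*\delta_i'$ for some $\beta_i'\ll\beta_i$, $\delta_i'\ll\delta_i$''---rests on joint lower semicontinuity of $*$ and on complete distributivity of $L$, and is what couples the pointwise interpolation in $L$ with the witness-refinement procedure.
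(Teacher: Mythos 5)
Your proposal is correct and follows essentially the same route as the paper: the paper's proof is precisely the (sketched) observation that both $(x,t,\alpha)\in (R{\astsc}Q){\astsc}T$ and $(x,t,\alpha)\in R{\astsc}(Q{\astsc}T)$ are equivalent to membership in the ``triple composition'' you call $\langle R,Q,T\rangle$, and your detailed verification --- including the interpolation of $\ll$, the Scott-continuity refinement of the finite joins, and the use of condition (d) for $T$ in the $(R{\astsc}Q){\astsc}T$ direction --- correctly fills in the steps the paper leaves implicit.
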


\begin{proof} is similar to the~one for crisp
representations and reduces to the~observation that, for
$L$-ambiguous representations $R\subset S_1\times S_2\times L$,
$Q\subset S_2\times S_3\times L$, $S\subset S_3\times S_4\times L$,
both statements $(x,t,\alpha)\in (R{\astsc}Q){\astsc}S$ and
$(x,t,\alpha)\in R{\astsc}(Q{\astsc}S)$ are equivalent to
the~existence, for all $\alpha'\ll\alpha$ and $t'\ll t$, of
$n\in\BBN$, $y_1,\dots,y_n\in S_2$, $z_1,\dots,z_n\in S_3$,
$\beta_1,\gamma_1,\delta_1,\dots,\beta_n,\gamma_n,\delta_n\in L$
such that
\begin{gather*}
(x,y_1,\beta_1),\dots,(x,y_n,\beta_n)\in R,
\quad
(y_1,z_1,\gamma_1),\dots,(y_n,z_n,\gamma_n)\in Q,
\\
(z_1,t',\delta_1),\dots,(z_n,t',\delta_n)\in S,
\quad
\beta_1*\gamma_1*\delta_1\lor\dots\lor\beta_n*\gamma_n*\delta_n\ge\alpha'.
\end{gather*}
\end{proof}

Hence we obtain a~category:

\begin{proposition}
All continuous semilattices with bottom elements and all
pseudo-invertible $L$-ambiguous representations form a~category
$\Sempral$, which contains $\Sempr$ as a~subcategory.
\end{proposition}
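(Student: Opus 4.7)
The plan is to identify the identity arrows, verify the two unit laws, and exhibit an embedding functor from $\Sempr$. Closure of $\astsc$ on pseudo-invertible $L$-ambiguous representations and associativity of $\astsc$ are already supplied by the preceding Corollary and Proposition, so nothing further is needed for those axioms.

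For the identity arrow $\id{S}:S\Rightarrow S$ I would take $\id{S}=\{(x,y,\alpha)\in S\times S\times L\mid y\le x\}$, i.e., the trivial $L$-extension of the crisp identity $E_S$. Axioms (a${}'$)--(c${}'$) are immediate: each $\alpha$-cut is $E_S$, each $x\id{S}\alpha=\{x\}\dns$ is non-empty and Scott closed, and $xy\id{S}$ is either $L$ or $\{0\}$, hence non-empty, directed, and Scott closed in $L$. Pseudo-invertibility (d) follows by interpolation in the continuous semilattice $S$: from $y'\ll y\le x$ pick $x'$ with $y'\le x'\ll x$, and then $(x',y',\alpha')\in\id{S}$.

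The central verification is the unit law $\id{S_1}\astsc R=R$ for pseudo-invertible $R\subset S_1\times S_2\times L$. Unfolding the definition, $(x,z,\alpha)\in\id{S_1}\astsc R$ means that for all $z'\ll z$ and $\alpha'\ll\alpha$ there exist $y_i\le x$ and $\beta_i,\gamma_i\in L$ with $(y_i,z',\gamma_i)\in R$ and $\bigvee_i\beta_i*\gamma_i\ge\alpha'$. Upward closure of $R$ in its first argument yields $(x,z',\gamma_i)\in R$, monotonicity of $*$ forces $\beta_i*\gamma_i\le\gamma_i$, and directedness of $xz'R$ in $L$ lets me bound $\gamma_1,\dots,\gamma_n$ by a single element of $xz'R$; together these give $\alpha'\in xz'R$. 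Scott closedness of $xR\alpha'$ in $S_2$ then promotes $z'\ll z$ to $z$, and Scott closedness of $xzR$ in $L$ promotes $\alpha'\ll\alpha$ to $\alpha$, establishing $(x,z,\alpha)\in R$. For the reverse inclusion I invoke pseudo-invertibility (d) of $R$: given $(x,z,\alpha)\in R$ and $z'\ll z$, $\alpha'\ll\alpha$, it supplies $x'\ll x$ with $(x',z',\alpha')\in R$, and then $y_1=x'$, $\beta_1=1$, $\gamma_1=\alpha'$ witness membership in $\id{S_1}\astsc R$. The right unit law $R\astsc\id{S_2}=R$ is dual, using downward closure of $R$ in its second argument in place of upward closure in the first.

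For the subcategory claim I define $I:\Sempr\to\Sempral$ by $IS=S$ on objects and $IR=\{(x,y,\alpha)\in S_1\times S_2\times L\mid(x,y)\in R\}$ on arrows. Each $IR$ is an $L$-ambiguous representation by direct check, and it is pseudo-invertible because the crisp witness $x'\ll x$ provided by (c) for $R$ works for (d) of $IR$ regardless of $\alpha'$. Evidently $IE_S=\id{S}$, and preservation of composition $I(R;Q)=IR\astsc IQ$ follows because the $L$-coordinates in $IR$ and $IQ$ are unconstrained, so the witnesses $\beta_i=\gamma_i=1$ always suffice in $\astsc$ and the formula collapses to the crisp composition $R;Q$ tensored trivially with $L$. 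The main obstacle I anticipate is the bookkeeping in the unit law, where directed-completeness, Scott-closedness, and interpolation must be chained in the right order; everything else is essentially a transfer from the crisp case treated earlier.
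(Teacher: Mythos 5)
Your overall route is the one the paper intends: closure under ``${\astsc}$'' and associativity are supplied by the preceding Corollary and Proposition, so the only remaining work is to exhibit identities and the embedding, and the paper in fact prints no proof beyond stating the embedding $I^*_L$. Your detailed check of the unit law is a correct filling-in of what is left implicit: the chain (upward closure of $R$ in the first argument, $\beta*\gamma\le 1*\gamma=\gamma$, directedness of $xz'R$ to absorb the finite join, then Scott-closedness of $xR\alpha'$ in $S_2$ and of $xzR$ in $L$) is exactly what is needed. Your appeal to pseudo-invertibility (d) in the reverse inclusion is harmless but unnecessary: downward closure of $R$ in its last two coordinates already gives $(x,z',\alpha')\in R$, and the witness $y_1=x$, $\beta_1=1$, $\gamma_1=\alpha'$ suffices.

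There is, however, one concrete slip in the formulas. As written, $\id{S}=\{(x,y,\alpha)\mid y\le x\}$ and $IR=\{(x,y,\alpha)\mid (x,y)\in R\}$ are \emph{not} $L$-ambiguous representations: axiom (b) (equivalently, non-emptiness in (c${}'$)) requires $(x,y,0)\in R$ for \emph{all} $x,y$, whereas your $\id{S}$ gives $xy\,\id{S}=\emptyset$ whenever $y\not\le x$, and your $IR$ gives $xy(IR)=\emptyset$ whenever $(x,y)\notin R$. This contradicts your own assertion that $xy\,\id{S}$ is ``either $L$ or $\{0\}$'', which shows you had the right object in mind; the correct formulas are $\{(x,y,\alpha)\mid y\le x\text{ or }\alpha=0\}$ and $I^*_LR=\{(x,y,\alpha)\mid (x,y)\in R\text{ or }\alpha=0\}$, the latter being exactly the paper's embedding. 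The repair changes nothing downstream: the extra triples with $\alpha=0$ contribute only $0*\gamma=0$ to the joins in the ``${\astsc}$''-condition (and, for pseudo-invertibility, $\alpha'\ll 0$ forces $\alpha'=0$), so they are absorbed everywhere, and your functoriality argument for $I$ goes through as described.
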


The~embedding $I^*_L:\Sempr\to\Sempral$ preserves the~objects and
turns each crisp ambiguous representation $R\subset S_1\times S_2$
into an~$L$-ambiguous one as follows:
$$
I^*_L R=\{(x,y,\alpha)\subset S_1\times S_2\times L\mid (x,y)\in R
\text{ or }\alpha=0\}.
$$

Clearly there is also the~embedding $I^{\hat
*}_L:\Sempr\to\Semprhl$ into the~category built upon
the~``swapped'' operation ``${\hat *}$''.

We denote an~arrow $R$ from $S_1$ to $S_2$ in $\Sempral$ with
$R:S_1\Rightarrow^* S_2$. The~composition of $R$ and $Q$ is denoted
by $R{\astsc}Q$ (in direct order) or $Q\circledast R$ (in reverse
order) in $\Sempral$, and by $R{\hastsc}Q$ or $Q\hoast R$
respectively in~$\Semprhl$.

\begin{proposition}
The~self-duality $(-)^\sms:\Sempr\to\Sempr$ extends to
contravariant functors $(-)^\sms:\Sempral\to\Semprhl$ and
$(-)^\sms:\Semprhl\to\Sempral$. Both pairwise compositions of these
functors are isomorphic to the~identity functors.
\end{proposition}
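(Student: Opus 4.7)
The plan is to promote the self-duality on $\Sempr$ to the required contravariant functors between $\Sempral$ and $\Semprhl$. On objects I would send $S$ to $\hat S$; on a pseudo-invertible $L$-ambiguous representation $R:S_1\Rightarrow^* S_2$ I would set $R^\sms:\hat S_2\Rightarrow^{\hat *}\hat S_1$. That $R^\sms$ is a well-defined $\hat L$-ambiguous representation is the content of the Proposition earlier in the section, and pseudo-invertibility of $R^\sms$ comes for free by applying $(-)^\sms$ once more to the equation $R = R^\ssms$, yielding $(R^\sms)^\ssms = R^\sms$. The opposite assignment $\Semprhl\to\Sempral$ is defined symmetrically.

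Functoriality splits into two verifications. The contravariant composition rule $(R\astsc Q)^\sms = Q^\sms \hastsc R^\sms$ is exactly the immediately preceding Corollary for pseudo-invertible representations. Preservation of identities requires the direct calculation $(I^*_L E_S)^\sms = I^{\hat *}_L E_{\hat S}$, which I would obtain by invoking the cut formula $(R^\sms)_\alpha = \bigcap_{\beta\ll\alpha}(R_\beta)^\sms$ for $R = I^*_L E_S$, noting that each positive $\beta$-cut of $I^*_L E_S$ equals the crisp $E_S$ with dual $(E_S)^\sms = E_{\hat S}$ by the $\Sempr$-level self-duality, and reconciling the $0$-cut by the universal convention that the $0$-cut of any $L$-ambiguous representation is the full product.

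For the natural isomorphisms $\mathrm{id}_{\Sempral}\cong(-)^\sms\circ(-)^\sms$ and $\mathrm{id}_{\Semprhl}\cong(-)^\sms\circ(-)^\sms$, I would take the component at $S$ to be the Lawson isomorphism $u_S$ from $\Semz$, lifted along the composite embedding $\Semz\hra\Sempr\hra\Sempral$. The embeddings preserve isomorphisms, so each $u_S$ is invertible in $\Sempral$. Naturality at a pseudo-invertible $R:S_1\Rightarrow^* S_2$ demands $u_{S_1}\astsc R^\ssms = R\astsc u_{S_2}$, which via $R^\ssms = R$ reduces to the plain commutation $u_{S_1}\astsc R = R\astsc u_{S_2}$. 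I would handle this by splitting both sides into their $\alpha$-cuts and invoking the $\Sempr$-level naturality of $u$ at each level, exploiting that $u_{S_i}$ is the $I^*_L$-lift of a crisp arrow and hence carries no genuinely fuzzy content outside the trivial $0$-cut. The mirror construction on $\Semprhl$ is identical up to swapping $*$ and $\hat *$.

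The main obstacle I anticipate is the naturality square with respect to genuinely fuzzy $R$: because composition $\astsc$ mixes Scott continuity in the semilattice with quantale continuity in $L$, one cannot simply quote the $\Sempr$-level naturality verbatim. I expect the cleanest route is to unwind both $u_{S_1}\astsc R$ and $R\astsc u_{S_2}$ through the way-below approximation in $L$ together with closure in $S_2$, and observe that every level reduces to the crisp naturality square already in hand. A minor secondary concern is the $0$-cut bookkeeping in the identity-preservation step, since the dual of the full crisp cut is not itself the full crisp cut; this is absorbed by the universal $0$-cut convention on $L$-ambiguous representations.
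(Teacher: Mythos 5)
The paper gives no proof of this proposition at all: it is left as a direct assembly of the three results preceding it, and your proposal correctly identifies every needed ingredient for the functoriality part --- well-definedness of $R^\sms$ as an $\hat L$-ambiguous representation, propagation of pseudo-invertibility via $(R^\sms)^\ssms=(R^\ssms)^\sms=R^\sms$, and contravariant preservation of composition via the Corollary $Q^\sms{\hastsc}R^\sms=(R{\astsc}Q)^\sms$. Where you genuinely diverge from the paper's setup is in the final clause. The paper computes $R^\ssms$ ``using the reverse (in fact the same) separating compatibilities again'', so that $\hhat{S}=S$ on the nose and $R^\ssms$ is \emph{literally equal} to $R$ for pseudo-invertible $R$; the composite functors are then the identity functors themselves and there is no naturality square to check. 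Your route --- taking $\Hat S=S^\land$ canonically and supplying the Lawson isomorphisms $u_S$ as components --- is legitimate, but it is precisely what creates the ``main obstacle'' you then flag; under the paper's convention that obstacle does not arise. If you do insist on your route, the naturality verification is not actually completed: ``every level reduces to the crisp naturality square'' is a plan, not an argument, and you would need to carry out the cut-by-cut reduction (using, e.g., that composition with $I^*_L(If)$ for a $\Semz$-arrow $f$ can be computed with all $\gamma_i=1$ since $1$ is a unit for~$*$).

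On identity preservation, your treatment of the nonzero cuts is right: for $\alpha\ne 0$ there is a nonzero $\beta\ll\alpha$, each such cut of $I^*_LE_S$ equals $E_S$, and $(E_S)^\sms=E_{\Hat S}$. But the ``universal $0$-cut convention'' does not close the gap you correctly sense is there. The definition of $L$-ambiguous representation does force $R_0=S_1\times S_2$, yet the defining formula gives $(R^\sms)_0=\bigcap_{\beta\ll 0}(R_\beta)^\sms=(S_1\times S_2)^\sms$, which is the set of pairs $(\hat y,\hat x)$ with $\hat x=\ES$ or $\hat y\ne\ES$ --- strictly smaller than $\Hat{S_2}\times\Hat{S_1}$ whenever $S_1$ is nontrivial. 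So $(I^*_LE_S)^\sms$ and $I^{\hat *}_LE_{\Hat S}$ agree only on nonzero cuts. This defect is inherited from the paper's formula for $(\;)^\sms$ at the bottom cut rather than being a flaw of your strategy, but it cannot be ``absorbed by convention''; it has to be repaired explicitly, e.g., by excluding $\beta=0$ from the intersection defining $(R^\sms)_\alpha$ for $\alpha\ne0$ and decreeing the $0$-cut to be the full product.
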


Hence $(-)^\sms$ is a~``contravariant equivalence'' between
$\Sempral$ and $\Semprhl$.

\section{Examples and interpretation}

Although the above arguments are formally correct, motivation for
introduction of so complicated objects and constructions is rather
obscure. Here we present one of the~possible interpretations to
advocate the proposed theory.

Consider a~system which can be in different states, and these
states can change, e.g., a~game, position in which changes after
moves of players. Assume that any party involved (e.g., a~player)
at a moment of time can obtain only imperfect information about
the~state of the~system (``imperfect'' means that this information
only reduces uncertainty but not necessarily eliminates it). To
each such observation we assign a~continuous meet semilattice $S$
with zero, and its~elements are regarded as possible pieces of
information (or statements) about the~state of the~system (cf.
\cite{Scott:DDS:82} for a~detailed explanation why continuous
semilattices are an~appropriate tool for this purpose, and
\cite{GHK:CLD:03,Jstn:StSp:83} for more information on continuous
posets). If $x\le y$ in $S$, then the~information $x$ is more
specific (restrictive) than $y$. Respectively $0\in S$ means ``no
information''. The~meet of $x$ and $y$ is the~most specific piece
of information including both $x$ and $y$ (as particular cases). It
is not necessarily equivalent to ``$x$ or $y$'' in the~usual
logical sense.
\begin{example}
Let $S$ be the~set of all non-empty segments $[a,b]\subset [0,1]$
ordered by reverse inclusion. We regard $[a,b]$ as the~statement
``only points of $[a,b]$ have been selected''. Then
$[0,\frac13]\land [\frac23,1]=[0,1]$, which is wider than
``$[0,\frac13]$ or $[\frac23,1]$''.
\end{example}

The~simplest non-trivial such semilattice is $\{0,1\}$, which has
many natural interpretations, e.,g., $0=$``maybe possible'',
$1=$``surely impossible'' or $0=$``game outcome is unknown'',
$1=$``player wins''.

Another important case is when $S$ is the~hyperspace $\exp X$ of
all non-empty closed sets of a~compactum~$X$. Points of $X_i$ are
possible states of the~system, and $A\subcl X_i$ represents the
fact that one of the~states $x\in A$ is achieved. Then $\exp X$ is
ordered by reverse inclusion, hence $X\in\exp X$ is the~least
element that means ``anything can happen''. The~meet of $A$ and $B$
is their union, therefore can be interpreted as ``$A$ or $B$''.
The~ambiguous representations considered in the~two previous
sections were first introduced and their properties proved for this
particular case in~\cite{NykRep:AmbRep:11}.

A~crisp ambiguous representation $R\subset S\times S'$, i.e.,
an~arrow $R:S\Rightarrow S'$ in $\Sempr$, consists of all pairs
$(x,x')$ such that, given an~information $x$ in the~observation
$S$, one can assure that $x'$ will be valid in the~observation
$S'$. It is easy to understand why $y\ge x$, $y'\le x'$, and $xRx'$
imply $y'Ry'$, and why $xR0$ always holds. If we can observe $x'$
but not $x$, then the~``visible'' information $x'$ represents
the~``hidden'' information~$x$, which is not necessarily unique,
therefore the~term ``ambiguous representation'' has been chosen. In
particular, an~ambiguous representation models a~move in the~game
with imperfect information~: one imperfect knowledge on
the~complete state is replaced with another one.

In particular, an~ambiguous representation $R:S\Rightarrow\{0,1\}$
may mean that all statements $x\in S$ such that $xR1$ are
impossible, and the~remaining ones are probably (but not
necessarily) possible, or that any piece $x$ of information such
that $xR1$ guarantees that the~player can win (depending on
the~chosen interpretation of $\{0,1\}$).

Recall that $\Semz$ is a~subcategory of $\Sempr$, with
the~morphisms being meet-preserving zero-preserving Scott
continuous mappings. An~ambiguous representation $R:S\Rightarrow
S'$ in $\Sempr$ belongs to $\Semz$ if and only if $x_1Ry_1$,
$x_2Ry_2$, $y\le y_1$, and $y\le y_2$ imply existence of $x$ such
that $x\le x_1$, $x\le x_2$, and $xRy$. In other words, $R$ is
consistent with the~interpretation of ``$\land$'' as ``or'' both in
$S$ and $S'$~: if, given $x_i$, one may obtain $y_i$, $i=1,2$, then
``$y_1$ or $y_2$'' can be obtained from ``$x_1$ or $x_2$''.

In particular, an arrow $R:S\to\{0,1\}$ in $\Semz$ declares some
elements of $S_i$ impossible in such a~way that if $x_1$ and $x_2$
are impossible, then there is a~statement $x$ that includes $x_1$
and $x_2$ and is impossible as well.

\begin{example}
Let $S$ be the~previously defined semilattice of all segments
$[a,b]\subset[0,1]$, $R,R'\subset S\times\{0,1\}$ be the~ambiguous
representations defined as follows:
$$
[a,b]R1\iff [a,b]\not\ni\frac12,\qquad[a,b]R'1\iff
[a,b]\subset(\frac13,\frac23),
$$
i.e., $R$ requires that the~point $\frac12$ must be selected, and
$R'$ demands that among the~selected points must be a~point
$\le\frac13$ or $\ge\frac23$. It is easy to see that $R'$ but not
$R$ is an~arrow in $\Semz$.
\end{example}

Cartesian product $S\times S'\times\dots$ represents ``joint
knowledge'' about states of the~system observed at
different~moments or by different parties. An arrow $P:S\times
S'\Rightarrow\{0,1\}$ carries an~information on simultaneous
realization of statements $x\in S$, $y\in S'$~: if $(x,y)P1$, then
it is impossible. If we additionally require the~mapping
$(x,y)\mapsto(x,y)P1$, which we denote with the~same letter $P$, to
be meet-preserving in each argument, then the~definition of
compatibility is obtained. Meet-preservation means that if $x_1$
and $x_2$ are incompatible (cannot be valid together) with $y$,
then ``$x_1$ or $x_2$'' is incompatible with $y$ as well, similarly
for the~second argument. A compatibility $P:S\times S'\to \{0,1\}$
is separating if for all $x_1\ne x_2$ in $S$ there is $y\in S'$
such that exactly one of $x_i$ is incompatible with $y$ w.r.t.\
$P$, similarly for $y_1\ne y_2$ in $S'$ and $x\in S$. Then elements
of $S'$ can be regarded as ``negative statements'' about state of
the~game observed at $S$~: given $y\in S'$ and a~separating
compatibility $P$, we declare impossible all $x\in S$ such that
$(x,y)P1$. For each $A\subset S'$ its transversal $A^\perp=\{x\in
S\mid xPy=0\text{ for all }y\in A\}$ consists of all statements in
$S$ that are compatible with all ``negative statements'' from~$A$.
Hence elements of $S'$ prevent or prohibit elements of $S$, and
vice versa.

\begin{example}
Let $L$ be a~completely distributive lattice, then so is $\tilde
L=L^{op}$. The~mapping $P:L\times L^{op}\to\{0,1\}$ defined as
$$
P(x,y)=
\begin{cases}
0, & y\not\ll x,
\\
1, & y\ll x,
\end{cases}
$$
is a~separating compatibility. In particular, this implies that
$L^\land\cong L^{op}$ for a~completely distributive lattice $L$.

We can regard the~value of $P(x,y)$ as impossibility of
distribution of total benefit between two players, whose gains are
$x\in L$ and $y\in L^{op}$. For simplicity let $L=[0,1]$, then
the~players want to obtain $x$ and $1-y$ respectively. If
$x+(1-y)>1\iff x>y$, then this is impossible.
\end{example}

For continuous semilattices $S_1$, $S_2$ with zeros consider
separating compatibilities $P_1:S_1\times
\Hat S_1\to\{0,1\}$, $P_2:S_2\times \Hat S_2\to\{0,1\}$. If
$R:S_1\Rightarrow S_2$ is an~ambiguous representation, then
$R^\sms:\Hat S_2\Rightarrow \Hat S_1$ is defined as follows:
$$
R^\sms =
\bigl\{
(\hat y,\hat x)\in \Hat{S_2}\times \Hat{S_1}
\mid
\text{if }xP_1\hat x=1\text{ for some }x\in S_1,
\text{then there is }y\in xR,\; yP_2\hat y=1
\bigr\}.
$$

Recall that each ``negative statement'' $\hat x\in\Hat S_1$ is
completely determined with the~set of all $x\in S_1$ it prohibits,
i.e., with $\{x\in S_1\mid xP_i\hat x=1\}$. Then $\hat x$ can be
considered as attainable from $\hat y\in\Hat S_2$ if and only if it
prohibits only those $x\in S_2$ that have possible consequences
$y\in xR$ incompatible with~$\hat y$ (hence, if $\hat y$ is valid,
then so must be $\hat x$). Passing from $R:S_1\Rightarrow S_2$ to
$R^\sms:\Hat S_2\Rightarrow\Hat S_1$ is analogous to passing from
$A\rightarrow B$ to $\neg B\rightarrow\neg A$ in logic.

For a~completely distributive~\cite{GHK:CLD:03} lattice $L$,
a~pseudo-invertible $L$-fuzzy ambiguous representation $R$ of $S$
in $S'$ is in fact a~pseudo-invertible crisp ambiguous
representation $R:S\Rightarrow S'\times L$ with the~additional
requirement that $xR(y,\alpha_1)$ and $xR(y,\alpha_2)$ imply
$xR(y,\alpha_1\land\alpha_2)$. If $xR(y,\alpha)$, then the~element
$\alpha$ of $L$ describes a~restriction on state of nature under
which $y$ is attainable from $x$. By definition there is the~most
restrictive such~$\alpha$. Then $R^\sms$ has the~same meaning as
above, but depends on the~parameter $\alpha$.

\section*{Concluding remarks and future work}

What we proposed is just a~sketch of application of crisp and
$L$-fuzzy ambiguous representations of continuous semillattices. It
is easy to note that properties of $\Sempr$ are similar to ones of
dialogue category, with $\{0,1\}$ being a~tensorial pole, and the
pair $\Sempral$, $\Semprhl$ looks like a~dialogue
chirality~\cite{Mell:PRIMS:16}. Than taking pseudo-inverse could be
tensorial negation. Unfortunately, it is not the case, e.g.,
$\{0,1\}$ is not exponentiable in $\Sempr$, similarly verification
fails for $\Sempral$ and $\Semprhl$. Nevertheless, the~similarity
is not incidental, and we plan to continue this research to model
games in normal and extended form, winning strategies etc. We are
also going to show why completely distributive lattices arise in
games with imperfect information.

Authors also express deepest gratitude to Prof. Michael Zarichnyi
for valuable ideas.

%\begin{acknowledgements}
%If you'd like to thank anyone, place your comments here
%and remove the percent signs.
%\end{acknowledgements}

% BibTeX users please use one of
%\bibliographystyle{spbasic}      % basic style, author-year citations
%\bibliographystyle{spmpsci}      % mathematics and physical sciences
%\bibliographystyle{spphys}       % APS-like style for physics
%\bibliography{}   % name your BibTeX data base

% Non-BibTeX users please use
%\begin{thebibliography}{}
%
% and use \bibitem to create references. Consult the Instructions
% for authors for reference list style.
%
%\bibitem{RefJ}
% Format for Journal Reference
%Author, Article title, Journal, Volume, page numbers (year)
% Format for books
%\bibitem{RefB}
%Author, Book title, page numbers. Publisher, place (year)
% etc
%\end{thebibliography}

\end{document}